\newtheorem{theorem}{Theorem}[section]
\newtheorem{lemma}[theorem]{Lemma}
\newtheorem{proposition}[theorem]{Proposition}
\newtheorem{corollary}[theorem]{Corollary}
\theoremstyle{definition}
\newtheorem{definition}[theorem]{Definition}
\theoremstyle{remark}
\newtheorem{remark}{Remark}[section]
\numberwithin{equation}{section}
\begin{document}

\title[Local well-posedness for a full dispersion Boussinesq system]{On the local well-posedness for a full dispersion Boussinesq system with surface tension}
\author[H. Kalisch]{Henrik Kalisch}

\author[D. Pilod]{Didier Pilod}
\address{Department of Mathematics, University of Bergen, Postbox 7800, 5020 Bergen, Norway}
\email{Henrik.Kalisch@uib.no}
\email{Didier.Pilod@uib.no}

\subjclass[2010]{Primary 35Q53,  35A01, 76B15; Secondary  35E15 }
\keywords{full dispersion Boussinesq type system, Whitham equation, modified energy, dispersive perturbations of hyperbolic systems}

\date{\today}

\maketitle

\begin{abstract} 
In this note, we prove local-in-time well-posedness for a fully dispersive Boussinesq system arising 
in the context of free surface water waves in two and three spatial dimensions.
Those systems can be seen as a weak nonlocal dispersive perturbation of the shallow-water system. 
Our method of proof relies on energy estimates and a compactness argument. 
However, due to the lack of symmetry of the nonlinear part, those traditional methods have to be supplemented
with the use of a modified energy in order to close the {\it a priori} estimates.
\end{abstract}

\section{Introduction}

Consideration is given to the one-dimensional fully dispersive Boussinesq system 
\begin{equation} \label{WB}
\left\{\begin{array}{l}
\partial_t \eta+\mathcal{K}(D)\partial_xu+\partial_{x}(\eta u)=0 \, , \\ 
\partial_t u+\partial_x \eta+u\partial_xu=0 \, ,
\end{array}\right.
\end{equation}
where $x \in \mathbb R$, $t \in \mathbb R$, $\eta(x,t) \in \mathbb R$ and $u(x,t) \in \mathbb R$, and its two-dimensional counterpart 
\begin{equation} \label{WB2d}
\left\{\begin{array}{l}
\partial_t \eta+\mathcal{K}(D) \nabla \cdot \textbf{u}+\nabla \cdot (\eta \textbf{u})=0 \, , \\ 
\partial_t \textbf{u}+\nabla \eta+\frac12 \nabla |\textbf{u}|^2=0 \, ,
\end{array}\right.
\end{equation}
for $x \in \mathbb R^2$, $t \in \mathbb R$, $\eta(x,t) \in \mathbb R$ and $\textbf{u}(x,t) \in \mathbb R^2$, where $\mathcal{K}(D)$ is a nonlocal operator related to the dispersion of the linearized water-wave system 
in finite depth. Namely, $\mathcal{K}(D)$ is defined as a Fourier multiplier associated with the symbol 
\begin{equation} \label{K_symbol}
K(\xi)= \frac{\tanh(|\xi|)}{|\xi|} \big(1+\beta |\xi|^2 \big) \, ,
\end{equation}
where $\beta$ is a nonnegative dimensionless number related to the surface tension (see \cite{ReKa}).

\medskip
Those systems were proposed in \cite{La,AcMiPa,KLPS} as approximate models for the study of surface water waves, 
and provide a two-directional alternative to the well known Whitham equation. 
We also refer to \cite{HuPa,DIT,DDKM,Di} for other versions of full-dispersion Boussinesq type systems. 
The unknowns $\eta$ and $u$ in \eqref{WB} represent respectively 
the deflection of the free surface from its equilibrium position ($\eta=0$) and the velocity at the free surface, while the bottom is assumed to be at constant depth $h=-1$. 

The one-dimensional Whitham equation 
\begin{equation} \label{Whitham} 
\partial_tu+\mathcal{W}(D)\partial_xu+u\partial_xu=0 \, ,
\end{equation}
where $x \in \mathbb R$, $t \in \mathbb R$, $u=u(x,t) \in \mathbb R$ and $\mathcal{W}(D)$ is the Fourier multiplier associated with the symbol $W(\xi):=\sqrt{K(\xi)}$, was introduced by Whitham in \cite{Whit} as an alternative to the Korteweg-de Vries (KdV) equation by keeping the exact dispersion of the linearized water waves system in finite depth. This equation has drawn quite a bit of attention lately. In particular, it displays, in the case of pure gravity waves ($\beta=0$), several interesting phenomena already predicted by Whitham: a solitary wave regime close to KdV \cite{EhGrWa}, the existence of a wave of greatest height (Stokes wave) \cite{EhWa}, the existence of shocks \cite{Hu}, and modulational instability of 
steady periodic waves \cite{HuJo,Sanford}. Note that when surface tension is taken into account ($\beta>0$), the dynamics of \eqref{Whitham} appears to be completely different (see \cite{KLPS} and the references therein). Moreover, it was proved to be a relevant water wave model in the long wave regime on the same time scale as  the KdV equation \cite{La,KLPS}.\footnote{More precisely it was proved to be consistent with the KdV equation on those time scales.}
We also refer to \cite{Carter,TKCO,KLPS} for other interesting numerical simulations. 

\smallskip
Returning to the full-dispersion systems \eqref{WB} and \eqref{WB2d}, it has been shown in \cite{EhPeWa} that \eqref{WB}  is locally well-posed in the case of pure gravity waves ($\beta=0$) if one makes the assumption that the initial elevation $\eta$ is bounded by below by a positive constant. No results seem to be known when surface tension is taken into account, i.e. in the case $\beta>0$.

Our main result in this note is a proof of well posedness for systems \eqref{WB} and \eqref{WB2d} 
in the case of $\beta>0$ under a non-cavitation assumption on the initial surface elevation $\eta(\cdot,0)$.

\begin{definition} 
Let $d=1$ or $2$ and $s> \frac{d}2$. We say that the initial elevation $\eta_0 \in H^s(\mathbb R^2)$ satisfies the \emph{non-cavitation condition} if 
\begin{equation} \label{noncavitation}
\exists \, h_0 \in (0,1) \quad \text{such that} \quad 1+\eta_0(x) \ge h_0 , \ \forall \, x \in \mathbb R^ d \, .
\end{equation}
\end{definition}

\begin{remark} 
The non-cavitation condition \eqref{noncavitation} is a physical condition meaning that the elevation of the initial wave cannot touch the bottom of the fluid. 
\end{remark}
\begin{theorem} \label{LWP}
Assume that $\beta>0$. 

\noindent (i) Let $s >\frac52$. Let  $(\eta_0,u_0)  \in H^{s}(\mathbb R) \times H^{s+\frac12}(\mathbb R)$ satisfying the non-cavitation condition \eqref{noncavitation}. 
Then, there exists a positive time $T=T(\|(\eta_0,u_0)\|_{H^s \times H^{s+\frac12}})$, (which can be chosen as a non-decreasing function of its argument), and a unique solution $(\eta,u)$ to \eqref{WB} satisfying 
\begin{equation} \label{LWP.1}
(\eta,u) \in C\big([0,T] : H^{s}(\mathbb R) \times H^{s+\frac12}(\mathbb R)\big) \quad \text{and}  \quad (\eta(\cdot,0),u(\cdot,0))=(\eta_0,u_0)\, . 
\end{equation} 
In addition, the flow function mapping initial data to solutions is continuous.

\smallskip
\noindent (ii) Let $s >3$. Let  $(\eta_0,\textbf{u}_0)  \in H^{s}(\mathbb R^2) \times H^{s+\frac12}(\mathbb R^2)^2$ satisfying the non-cavitation condition \eqref{noncavitation} and such that $\text{curl}\, \textbf{u}_0=0$. Then, there exists a positive time $T=T(\|(\eta_0,\textbf{u}_0)\|_{H^s \times H^{s+\frac12}\times H^{s+\frac12}})$,  (which can be chosen as a non-decreasing function of its argument), and a unique solution $(\eta,u)$ to \eqref{WB2d} satisfying 
\begin{equation} \label{LWP.2}
(\eta,\textbf{u}) \in C\big([0,T] : H^{s}(\mathbb R) \times H^{s+\frac12}(\mathbb R)^2\big) \quad \text{and}  \quad (\eta(\cdot,0),\textbf{u}(\cdot,0))=(\eta_0,\textbf{u}_0)\, . 
\end{equation} 
In addition, the flow function mapping initial data to solutions is continuous.
\end{theorem}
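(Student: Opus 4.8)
The plan is to establish \emph{a priori} estimates at the $H^s\times H^{s+\frac12}$ regularity by an energy method adapted to the nonlocal, non-symmetric structure, and then to construct the solutions through a regularization--compactness scheme, closing with uniqueness and continuous dependence. I would begin by recording the linear structure: writing $\mathcal{K}(D)=\mathcal{W}(D)^2$, the linearization of \eqref{WB} symmetrizes in the variables $(\eta,\mathcal{W}(D)u)$ because $\mathcal{W}(D)\partial_x$ is skew-adjoint, and the quantity $\|\eta\|_{L^2}^2+\langle u,\mathcal{K}(D)u\rangle=\|\eta\|_{L^2}^2+\|\mathcal{W}(D)u\|_{L^2}^2$ is conserved. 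Since $\beta>0$ forces $K(\xi)$ to be bounded below by a positive constant while behaving like $\beta|\xi|$ at high frequency, this both explains the $+\frac12$ gain on $u$ and suggests the working energy, at order $s$,
\begin{equation*}
E^s(\eta,u)=\frac12\|J^s\eta\|_{L^2}^2+\frac12\big\langle J^s u,(\mathcal{K}(D)+\eta)\,J^s u\big\rangle,\qquad J^s=\langle D\rangle^s,
\end{equation*}
whose velocity part carries the solution-dependent weight $\mathcal{K}(D)+\eta$. The role of the non-cavitation condition \eqref{noncavitation} is to guarantee that $E^s$ (combined with the lower-order energy $E^0$) is coercive and equivalent to $\|\eta\|_{H^s}^2+\|u\|_{H^{s+1/2}}^2$: the high-frequency gain coming from $\beta>0$ dominates the top order, and $1+\eta\ge h_0>0$ controls the sign of the lower-order contribution $\int\eta\,(J^su)^2$.

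The \emph{a priori} estimate is the heart of the matter, and the step I expect to be the main obstacle. Applying $J^s$ to \eqref{WB} and differentiating $E^s$ in time, the purely dispersive and linear coupling contributions cancel by the skew-adjointness of $\mathcal{W}(D)\partial_x$; the transport pieces $u\partial_x(\cdot)$ are integrated by parts and merely produce factors of $\|u\|_{W^{1,\infty}}$; and the Burgers-type nonlinearity $u\partial_xu$ of the velocity equation is handled by splitting $\mathcal{K}(D)=\mathcal{W}(D)\,\mathcal{W}(D)$ and distributing one factor of $\mathcal{W}(D)$ to each side, after which commutator estimates of Kato--Ponce/Calder\'on type keep every term at the controlled level $H^{s+1/2}$. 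The genuinely dangerous term is the top-order part of the coupling $\partial_x(\eta u)$ in the first equation, namely $\int \eta\,\partial_x J^s\eta\,J^s u$, which loses one derivative on $\eta$ and cannot be symmetrized against anything coming from the velocity equation --- this is the ``lack of symmetry of the nonlinear part''. The point of the modified energy is that this term is cancelled \emph{exactly} by the contribution obtained when the time derivative falls on the solution-dependent weight $\eta$: differentiating $\frac12\langle J^su,\eta\,J^su\rangle$ and using $\partial_tJ^su=-\partial_xJ^s\eta-J^s(u\partial_xu)$ reproduces $-\int\eta\,\partial_xJ^s\eta\,J^su$, together with a residual $\tfrac12\int\partial_t\eta\,(J^su)^2$ that, after substituting $\partial_t\eta=-\mathcal{K}(D)\partial_xu-\partial_x(\eta u)$, is of lower order precisely because $s>\tfrac52$. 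The outcome is a closed inequality $\frac{d}{dt}E^s\le C\big(\|(\eta,u)\|_{H^s\times H^{s+1/2}}\big)\,E^s$, whence Gronwall's lemma yields a bound on an interval $[0,T]$ with $T$ depending only on the initial norm and on $h_0$.

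With a uniform \emph{a priori} bound available, I would construct solutions by regularization and compactness: regularize \eqref{WB} (by a symmetric parabolic term, or by Friedrichs mollifiers) so that the regularized problems are solvable in $H^s$ while preserving the structure that makes the modified-energy estimate hold uniformly in the regularization parameter, the non-cavitation condition persisting on a uniform time interval by continuity. Uniform bounds in $C([0,T];H^s\times H^{s+1/2})$ together with equicontinuity in time in a weaker norm read off from the equations give, by the Aubin--Lions--Simon lemma, strong convergence of a subsequence on compact sets, which suffices to pass to the limit in the nonlinear terms. Uniqueness follows from an energy estimate for the difference of two solutions carried out one order lower, where the difference is controlled; and the sharp time-continuity $C([0,T];H^s)$, together with continuity of the data-to-solution map, is obtained by the Bona--Smith method, comparing the solution with those emanating from mollified data.

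Finally, for the two-dimensional system \eqref{WB2d} I would first observe that the flow preserves irrotationality: since $\partial_t\textbf{u}=-\nabla\big(\eta+\tfrac12|\textbf{u}|^2\big)$ is a gradient, one has $\partial_t(\text{curl}\,\textbf{u})=0$, so the hypothesis $\text{curl}\,\textbf{u}_0=0$ is propagated. This reduces the algebraic structure of the nonlinearity to that of the scalar case and permits the identical modified-energy argument, now at the higher threshold $s>3$ dictated by the Sobolev embeddings in dimension $d=2$. In summary, the crux is the design and justification of the modified energy: recognizing that the asymmetric coupling $\partial_x(\eta u)$ causes an uncurable one-derivative loss in the naive energy, choosing the weight $\mathcal{K}(D)+\eta$ so that its time derivative cancels precisely this loss, and verifying, through $\beta>0$ and \eqref{noncavitation}, that the resulting energy remains coercive and equivalent to the $H^s\times H^{s+1/2}$ norm uniformly along the regularized flow.
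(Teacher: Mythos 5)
Your proposal follows essentially the same route as the paper: the modified energy with the cubic correction $\tfrac12\int\eta\,(J^s_xu)^2$ (your weight $\mathcal{K}(D)+\eta$ coincides with the paper's $\|J_x^{s+\frac12}u\|_{L^2}^2+\int\eta\,(J^s_xu)^2$ up to the harmless replacement of $(1+\xi^2)^{s+\frac12}$ by $(1+\xi^2)^{s}K(\xi)$), the same exact cancellation of the half-derivative-losing coupling term $\int\eta\,J^s_x\partial_xu\,J^s_x\eta$ against the piece of $\tfrac{d}{dt}\tfrac12\int\eta\,(J^s_xu)^2$ in which $\partial_t$ falls on $J^s_xu$ and is replaced by $-\partial_xJ^s_x\eta$, the same coercivity roles for $\beta>0$ and non-cavitation, and the same regularization/compactness construction, lower-order difference estimate for uniqueness, and Bona--Smith endgame, including the curl-preservation reduction in two dimensions. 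The only quibble is a wording slip: the cancelling contribution is the one where the time derivative hits $J^s_xu$ (as your displayed computation correctly shows), not the one where it hits the weight $\eta$ --- that latter piece is the lower-order residual $\tfrac12\int\partial_t\eta\,(J^s_xu)^2$.
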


\begin{remark} 
The same results also hold in the periodic case. The proof is similar up to small changes in the commutator estimates (see for example \cite{KePi}).
\end{remark}

\begin{remark} 
The time of existence $T$ in Theorem \ref{WB} with respect to the parameter $\beta$ 
satisfies $T(\beta) \lesssim \beta$. In particular $T(\beta) \to 0$, when $\beta \to 0$. 
Note that in the case of pure gravity waves ($\beta=0$), system \eqref{WB} is probably 
ill-posed\footnote{We refer to \cite{KLPS} for an heuristic argument of this fact.} 
unless one makes the nonphysical assumption that $\eta \ge c_0>0$ as in \cite{EhPeWa}. 
One interesting observation is that the present situation appears to be similar to
the case of the nonlinear Kevin-Helmholtz problem for two-fluid interfaces,
where the criterion established in \cite{La2} explains why capillarity is necessary 
for the well-posedness of the system, but does not affect the long-time dynamics.

For the sake of simplicity, we will renormalize the system and assume that $\beta=1$ in the following.
\end{remark}

\begin{remark} We do not consider here the system in the long-wave regime as it was done in \cite{KLPS}, 
since our method of proof does not seem to provide, at least directly, good lower bounds 
for the existence time with respect to the small parameter $\epsilon$ measuring the 
size of the dispersive and nonlinear effects, which are of the same order in this regime. 
It remains nevertheless an interesting issue to  prove that systems \eqref{WB} and \eqref{WB2d} 
are locally well posed over large time as it was done for some of the $(a,b,c,d)$-Boussinesq 
systems \cite{SaXu,Bu,SaWaXu}.   
\end{remark}

\medskip 

The proof of Theorem \ref{LWP} is based on energy estimates and a standard compactness argument. The main difficulty  lies in the lack of  symmetry of the nonlinearity in \eqref{WB}. Indeed, a direct energy estimate at the $H^s \times H^{s+\frac12}$ level\footnote{The scaling $H^s \times H^{s+\frac12}$ is needed to cancel out the linear terms} in the one dimensional case gives only (for $s$ large enough)
\begin{equation} \label{direct_ee}
\begin{split}
&\frac{d}{dt} \big( \|\eta(t)\|_{H^s}^2 +\|u(t)\|_{H^{s+\frac12}}^2 \big) \\&\lesssim \big(1+\|\eta(t)\|_{H^s}+\|u(t)\|_{H^{s+\frac12}}\big)\big( \|\eta(t)\|_{H^s}^2 +\|u(t)\|_{H^{s+\frac12}}^2\big)
 +\left| \int \eta J_x^s\partial_xu J^s_x\eta\right|  
\end{split}
\end{equation}
where $J^s_x$ denotes the Bessel potential of order $-s$. Note that the last term on the right-hand side of \eqref{direct_ee} cannot be handled directly by integration by parts or commutator estimates. 

In the absence of dispersion, it is well-known that one can symmetrize the system by using hyperbolic symmetrizers. We refer for example to \cite{La} for the shallow water system. This technique can be adapted in a nontrivial way when one adds a local dispersive perturbation to the system \cite{SaXu,SaWaXu}. However, it is not clear whether it still applies for the systems \eqref{WB} and \eqref{WB2d}\footnote{Note that the technique may work for some other systems with a nonlocal dispersion. We refer for example to \cite{Xu} for a nonlocal dispersive system in the context of internal wave. }. 

Here, we use instead a modified-energy method. The idea is to add the lower-order cubic term $\int \eta (J^s_xu)^2$ to the energy. The linear contribution of the derivative of this term will cancel out the last term on the right-hand side of \eqref{direct_ee}, while the contribution coming from the nonlinear terms can easily be controlled. This approach enables us to close the energy estimate. A similar argument can be used to derive an energy estimate for the difference of two solutions. Once these estimates are established, the proof proceeds using bootstrapping and classical compactness arguments. Finally, it is worth noticing that the non-cavitation condition on the initial data, which propagates through the flow of \eqref{WB}, is needed to ensure the coercivity of the modified energy.

The proof in the 2 dimensional case is very similar. This time the energy needs  to be modified by the term $\int \eta |J^s_x\textbf{u}|^2$.
Moreover, we also need to assume a curl-free condition on the initial velocity $\textbf{u}_0$. Note that this condition is preserved by the flow of \eqref{WB2d}. When $\textbf{u}$ is curl-free, the term $\frac12 \nabla |\textbf{u}|^2$ can be written as two transport terms, namely $(\textbf{u}\cdot \nabla u_1,\textbf{u} \cdot \nabla u_2)^{T}$, where $\textbf{u}=(u_1,u_2)^{T}$. This fact enables us to close the energy estimates for this term by using the Kato-Ponce commutator estimates (see for example Lemma 4.2. in \cite{LiPiSa}). Note that our proof would also work, without the curl-free assumption on $\textbf{u}$, when considering a nonlinearity of the form $\big(\textbf{u} \cdot \nabla\big) \textbf{u}$ instead of $\nabla |\textbf{u}|^2$ in the second line of \eqref{WB2d}. 
For the sake of simplicity, we will focus below on the proof in the one-dimensional case and will indicate in the last section what are the main changes in the two-dimensional case.

The use of a modified energy is well-known to be  a  powerful tool in the study of nonlinear partial differential equations. We refer among others to \cite{Kw,KePi} (well-posedness results for higher-order KdV type equations), \cite{HITW} (long time existence results for small initial data for the Burgers-Hilbert equation) and \cite{PlTzVi} (growth of Sobolev norm for NLS) for some applications of the modified energy methods in related contexts.  The method of proof introduced here seems to be quite general and we hope that it will have further applications to other weakly dispersive and nonlocal
perturbations of nonlinear hyperbolic systems.

\medskip 
The paper is organized as follows: in Section 2, we give the notations and recall some commutator estimates. Section 3 and 4 are devoted to the proof of the energy estimates respectively for a solution and for the difference of two solutions. Finally, we give the proof of Theorem \ref{LWP} (i) in Section 5 and explain the main changes for the two-dimensional case in Section 6.

\section{Notations and preliminary estimates}

\subsection{Notations} 

\begin{itemize}
\item{} Throughout the text, $c$ will denote a positive constant which may change from line to line. Also, for any positive numbers $a$ and $b$, the notation $a \lesssim b$ means that $a \le c b$. 

\item{} The operator $\mathcal{F}$ denotes the Fourier transform. We often write $\mathcal{F}(f)(\xi)=\widehat{f}(\xi)$.

\item{} In one dimension, $\mathcal{H}$ will denote the Hilbert transform, \text{i.e} $\big( \mathcal{H}f \big)^{\wedge}(\xi)=-i \text{sgn}(\xi) \widehat{f}(\xi)$.

\item{} In two dimensions, $\mathcal{R}_j$, $j=1,2$, will denote the Riesz transforms, \textit{i.e.} $\big( \mathcal{R}_jf \big)^{\wedge}(\xi)=-i \frac{\xi_j}{|\xi|} \widehat{f}(\xi)$.

\item{} For any $\alpha \in \mathbb R$, $D_x^{\alpha}$  will denote the Riesz potential of order $-\alpha$, defined via Fourier transform by 
$\big(D_x^{\alpha}f \big)^{\wedge}(\xi)=|\xi|^{\alpha} \widehat{f}(\xi)$. In particular, it follows that $D_x^1=\mathcal{H}\partial_x$.

\item{} For any $\alpha \in \mathbb R$, $J_x^{\alpha}$ will denote the Bessel potential of order $-\alpha$, defined via Fourier transform by 
$\big(J_x^{\alpha}f \big)^{\wedge}(\xi)=(1+\xi^2)^{\frac{\alpha}2} \widehat{f}(\xi)$. In particular, it is well-known that the $L^2$-based Sobolev space $H^s$ can be defined by the norm $\|f\|_{H^s}=\|J^s_xf\|_{L^2}$.
\item{} If $A$ and $B$ are two operators, then $[A,B]$denotes the commutator between $A$ and $B$, \text{i.e.} $[A,B]f=ABf-BAf$.

\end{itemize}

\subsection{Fourier multiplier} We reformulate system \eqref{WB} as 
\begin{equation} \label{WBbis}
\left\{\begin{array}{l}
\partial_t \eta+\mathcal{M}(D)(1-\partial_x^2)u-\mathcal{H}u+\mathcal{H}\partial_x^2u+\partial_{x}(\eta u)=0 \, , \\ 
\partial_t u+\partial_x \eta+u\partial_xu=0 \, ,
\end{array}\right.
\end{equation}
where $\mathcal{H}$ is the Hilbert transform and $\mathcal{M}(D)$ is the Fourier multiplier associated to the symbol 
\begin{equation} \label{def:M}
M(\xi)=i\big(\tanh(\xi)-\text{sgn} \, (\xi)\big) \, .
\end{equation}
By recalling the pointwise estimate (see for example \cite{HuPa})
\begin{displaymath}
\big|\tanh(\xi)-\text{sgn} \, (\xi)\big| \le e^{-|\xi|}, \quad \forall \, \xi \in \mathbb R \, ,
\end{displaymath}
it follows easily from Plancherel identity that 
\begin{equation} \label{est:M}
\|J_x^s \mathcal{M}(D) f \|_{L^2} \lesssim \|f\|_{L^2} \, , \quad \forall \,s \in \mathbb R \, .
\end{equation}
Note that the implicit constant in the former inequality depends of course on $s$.
Moreover, we also have from Young's theorem on convolution 
\begin{displaymath} 
\|\mathcal{M}(D)(1-\partial_x^2) f \|_{L^{\infty}}=\|\big(M(\xi)(1+\xi^2)\big)^{\vee} \ast f \|_{L^{\infty}} \le \|\big(M(\xi)(1+\xi^2)\big)^{\vee} \|_{L^1} \|f\|_{L^{\infty}}  \, ,
\end{displaymath}
so that 
\begin{equation} \label{est:M_Linfty}
\|\mathcal{M}(D)(1-\partial_x^2) f \|_{L^{\infty}}\lesssim \|f\|_{L^{\infty}} \, ,
\end{equation}
since $: \xi \mapsto M(\xi)(1+\xi^2)$ is a Schwartz function.

\smallskip
Finally, we will also need  an estimate comparing the Bessel and Riesz potentials. We claim that
\begin{equation} \label{est:BR}
\|(J^1_x-D^1_x)\partial_xf \|_{L^2} \lesssim \|f\|_{L^2}  \, .
\end{equation}
Indeed, it follows from Plancherel's identity that 
\begin{displaymath}
\|(J^1_x-D^1_x)\partial_xf \|_{L^2}^2= \int |\xi|^2 \big| (1+\xi^2)^{\frac12}-|\xi|\big|^2 |\widehat{f}(\xi)|^2 d\xi  =  \int |\xi|^4 \big|\big(1+\frac1{\xi^2}\big)^{\frac12}-1  \big|^2|\widehat{f}(\xi)|^2d\xi \, ,
\end{displaymath}
which implies \eqref{est:BR}, since the function $:\xi \mapsto |\xi|^4 \big|\big(1+\frac1{\xi^2}\big)^{\frac12}-1  \big|^2$ is bounded on $\mathbb R$.

\subsection{Commutator estimates}

First, we state the Kato-Ponce commutator estimate \cite{KaPo}.
\begin{lemma}[Kato-Ponce commutator estimates] \label{Kato-Ponce}
Let $s \ge 1$, $p,\ p_2, \ p_3 \in (1,\infty)$ and $p_1, \ p_4 \in (1,\infty]$ be such that $\frac1p=\frac1{p_1}+\frac1{p_2}=\frac1{p_3}+\frac1{p_4}$ . Then, 
\begin{equation}  \label{Kato-Ponce1}
\|[J_x^s,f]g\|_{L^p} \lesssim \|\partial_xf\|_{L^{p_1}}\|J_x^{s-1}g\|_{L^{p_2}}+\|J_x^sf\|_{L^{p_3}}\|g\|_{L^{p_4}} \, ,
\end{equation}
for any $f, \, g $ defined on $\mathbb R$.
\end{lemma}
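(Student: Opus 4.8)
The plan is to realize the commutator as a bilinear Fourier multiplier and to reduce the estimate, via a paraproduct decomposition, to the Coifman--Meyer multiplier theorem. Writing $\langle \xi \rangle := (1+\xi^2)^{1/2}$, so that $\widehat{J_x^sh}(\xi)=\langle\xi\rangle^s\widehat{h}(\xi)$, a direct computation on the Fourier side gives
\begin{equation*}
\widehat{[J_x^s,f]g}(\xi)=\int_{\mathbb R}\big(\langle\xi\rangle^s-\langle\xi-\zeta\rangle^s\big)\widehat{f}(\zeta)\widehat{g}(\xi-\zeta)\,d\zeta \, ,
\end{equation*}
so that $[J_x^s,f]g$ is the bilinear operator with symbol $m(\zeta,\eta):=\langle\zeta+\eta\rangle^s-\langle\eta\rangle^s$ acting on the pair $(f,g)$, where $\zeta$ is the frequency variable of $f$ and $\eta$ that of $g$. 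First I would insert a Littlewood--Paley decomposition of $f$ and $g$ and split the resulting double sum into the three standard regimes according to the relative size of the two frequencies: a \emph{low--high} part ($|\zeta|\ll|\eta|$), a \emph{high--low} part ($|\eta|\ll|\zeta|$), and a \emph{resonant} part ($|\zeta|\sim|\eta|$).

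The gain of one derivative comes entirely from the low--high part. There one has $|\zeta+\eta|\sim|\eta|$, and a first-order Taylor expansion of $\langle\cdot\rangle^s$ yields $|m(\zeta,\eta)|\lesssim|\zeta|\,\langle\eta\rangle^{s-1}$, together with the matching bounds on the derivatives of $m$. Factoring out $\zeta$ and $\langle\eta\rangle^{s-1}$, the remaining symbol satisfies the Coifman--Meyer estimates uniformly, hence defines a bilinear multiplier bounded from $L^{p_1}\times L^{p_2}$ into $L^p$. Since the factor $\zeta$ corresponds to $\partial_x$ acting on $f$ and $\langle\eta\rangle^{s-1}$ to $J_x^{s-1}$ acting on $g$, this piece is controlled by $\|\partial_xf\|_{L^{p_1}}\|J_x^{s-1}g\|_{L^{p_2}}$, which is the first term on the right-hand side of \eqref{Kato-Ponce1}.

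On the high--low and resonant parts the commutator produces no such cancellation, and I would bound $m$ crudely. In the high--low regime $|\zeta+\eta|\sim|\zeta|$, so that $m$ is dominated by $\langle\zeta\rangle^s$; extracting this factor and applying Coifman--Meyer gives $\|J_x^sf\|_{L^{p_3}}\|g\|_{L^{p_4}}$. The resonant part is treated along the same lines, using $\langle\zeta+\eta\rangle^s\lesssim\langle\zeta\rangle^s$ to place $J_x^s$ on $f$, and again produces the second term.

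The main obstacle is twofold. First, the resonant interactions are not frequency-localized in the output, so their dyadic pieces must be resummed; this is where one invokes the Fefferman--Stein vector-valued maximal inequality together with Bernstein's inequality to control the associated square functions, and it is here that the hypothesis $s\ge1$ is used to absorb the loss. Second, the endpoint exponents $p_1,p_4=\infty$ are not directly covered by the Coifman--Meyer theorem and must be handled separately, replacing the offending $L^\infty$ norm by the stronger control coming from the corresponding Littlewood--Paley sum and the maximal function. Collecting the three contributions then yields \eqref{Kato-Ponce1}.
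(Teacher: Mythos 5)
The paper does not actually prove this lemma: it is quoted as a known result from Kato--Ponce \cite{KaPo} (in the generalized H\"older-exponent form found in the later literature), so there is no internal proof to compare yours against line by line. Judged on its own terms, your sketch follows the standard modern route: realize $[J_x^s,f]g$ as a bilinear multiplier with symbol $m(\zeta,\eta)=\langle\zeta+\eta\rangle^s-\langle\eta\rangle^s$, split into low--high, high--low and resonant paraproducts, extract the derivative gain on $f$ from the mean-value theorem in the low--high regime, and reduce to Coifman--Meyer. The Fourier-side identity is correct, and so is the assignment of the H\"older pair $(p_1,p_2)$ to the low--high piece and $(p_3,p_4)$ to the high--low and resonant pieces; you also correctly identify the two genuine technical obstacles, namely that the resonant symbol is not of Coifman--Meyer type (its quotient by $\langle\zeta\rangle^s$ fails the derivative bounds near $\zeta+\eta=0$, forcing the square-function/vector-valued maximal argument you invoke) and that the endpoints $p_1=\infty$, $p_4=\infty$ lie outside the classical multiplier theorem and must be reached through bilinear Calder\'on--Zygmund theory or a direct Littlewood--Paley argument. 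Two minor points of precision: the hypothesis $s\ge 1$ is really consumed in the low--high regime, where it guarantees that the first-order Taylor term dominates and that $J_x^{s-1}$ is an operator of nonnegative order; in the resonant resummation the geometric gain $2^{js}\|P_jf\|_{L^{p_3}}\lesssim\|J_x^sP_jf\|_{L^{p_3}}$ only requires $s>0$. And your argument is of course a proof outline rather than a complete proof --- the uniformity of the Coifman--Meyer bounds for the factored symbols and the summation of the dyadic pieces are asserted rather than carried out --- but the skeleton is the right one and matches how this inequality is established in the references the paper relies on.
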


We also state the fractional Leibniz rule proved in the appendix of \cite{KPV3}.
\begin{lemma} \label{LeibnizRule} 
Let $\sigma =\sigma_1+\sigma_2 \in (0,1)$ with $\sigma_i \in (0,\sigma)$ and $p, \  p_1, \ p_2 \in (1,\infty)$ satisfy
$\frac1p=\frac1{p_1}+\frac1{p_2}$. Then, 
\begin{equation}  \label{LeibnizRule1}
\|D_x^{\sigma}(fg)-fD_x^{\sigma}g-gD_x^{\sigma}f\|_{L^p} \lesssim \|D_x^{\sigma_1}f\|_{L^{p_1}}\|D_x^{\sigma_2}g\|_{L^{p_2}}.
\end{equation}
Moreover, the case $\sigma_2=0$, $p_2=\infty$ is also allowed.
\end{lemma}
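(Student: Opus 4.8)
The plan is to reduce the commutator estimate to the classical square-function characterization of the Riesz-potential spaces. Since $0<\sigma<1$, I would begin from the pointwise singular-integral representation of the Riesz potential,
\[
D_x^\sigma h(x)=c_\sigma\,\mathrm{p.v.}\int_{\mathbb R}\frac{h(x)-h(x-y)}{|y|^{1+\sigma}}\,dy,\qquad c_\sigma>0,
\]
valid first for Schwartz $h$ and then extended by density. Writing $\Delta_yh(x):=h(x)-h(x-y)$, a direct computation in which the first-order ``diagonal'' differences cancel yields the pointwise identity
\[
D_x^\sigma(fg)-fD_x^\sigma g-gD_x^\sigma f=-c_\sigma\int_{\mathbb R}\frac{\Delta_yf(x)\,\Delta_yg(x)}{|y|^{1+\sigma}}\,dy,
\]
since, after putting the three terms over the common kernel $|y|^{-(1+\sigma)}$, the numerator collapses to $[f(x)-f(x-y)][g(x-y)-g(x)]=-\Delta_yf(x)\Delta_yg(x)$. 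Note that, although each of the three terms on the left is only conditionally convergent, the integral on the right is absolutely convergent thanks to this product structure.

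The second step is to split the kernel symmetrically using $\sigma=\sigma_1+\sigma_2$, writing $|y|^{-(1+\sigma)}=|y|^{-(\frac12+\sigma_1)}\,|y|^{-(\frac12+\sigma_2)}$, and to apply the Cauchy--Schwarz inequality in the variable $y$. This controls the right-hand side pointwise by $c_\sigma\,S_{\sigma_1}f(x)\,S_{\sigma_2}g(x)$, where for $\alpha\in(0,1)$ I set $S_\alpha h(x):=\big(\int_{\mathbb R}|\Delta_yh(x)|^2|y|^{-1-2\alpha}\,dy\big)^{1/2}$. Hölder's inequality with $\frac1p=\frac1{p_1}+\frac1{p_2}$ then gives $\|D_x^\sigma(fg)-fD_x^\sigma g-gD_x^\sigma f\|_{L^p}\lesssim\|S_{\sigma_1}f\|_{L^{p_1}}\,\|S_{\sigma_2}g\|_{L^{p_2}}$.

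To finish, I would invoke the Stein characterization of potential spaces: for $\alpha\in(0,1)$ and $q\in(1,\infty)$ one has $\|S_\alpha h\|_{L^q}\approx\|D_x^\alpha h\|_{L^q}$, which follows from Littlewood--Paley theory. Applying it with $(\alpha,q)=(\sigma_1,p_1)$ and $(\sigma_2,p_2)$ --- legitimate precisely because $\sigma_1,\sigma_2>0$, which is the meaning of the hypothesis $\sigma_i\in(0,\sigma)$, and because $p_1,p_2\in(1,\infty)$ --- produces the desired bound. I expect this last step to be the real obstacle: the equivalence $\|S_\alpha h\|_{L^q}\approx\|D_x^\alpha h\|_{L^q}$ is the one non-elementary ingredient and it breaks down both at $\alpha=0$ and at $q=\infty$. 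This is also exactly why the endpoint $\sigma_2=0$, $p_2=\infty$ needs a separate treatment, since $S_0g$ is then undefined; there I would instead use $|\Delta_yg|\le2\|g\|_{L^\infty}$ and split the $y$-integral into $|y|\le1$ and $|y|>1$, exploiting the cancellation recorded by $S_\sigma f$ on the singular part. Alternatively, one can avoid the pointwise approach entirely and deduce the estimate from the Coifman--Meyer multiplier theorem applied to the bilinear symbol $\big(|\xi+\eta|^\sigma-|\xi|^\sigma-|\eta|^\sigma\big)|\xi|^{-\sigma_1}|\eta|^{-\sigma_2}$, which is a smooth Coifman--Meyer symbol on the relevant frequency cones.
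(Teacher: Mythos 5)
The paper does not actually prove this lemma: it is quoted verbatim from the appendix of \cite{KPV3} (Theorem A.12 there), where it is established by a Littlewood--Paley/paraproduct decomposition combined with Coifman--Meyer-type bilinear estimates. Your route --- the absolutely convergent first-difference representation of $D_x^{\sigma}$ for $\sigma\in(0,1)$, the trilinear cancellation identity reducing the remainder to $-c_\sigma\int\Delta_yf\,\Delta_yg\,|y|^{-1-\sigma}dy$, then Cauchy--Schwarz in $y$ and H\"older in $x$ --- is a genuinely different and classical alternative, and that algebraic part is correct.

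The gap is in the last step. The inequality you need, $\|S_\alpha h\|_{L^q}\lesssim\|D_x^\alpha h\|_{L^q}$ for the (non-averaged) Stein derivative, is \emph{not} valid for all $\alpha\in(0,1)$ and $q\in(1,\infty)$: Stein's characterization of the potential spaces holds only for $q>2n/(n+2\alpha)$, i.e.\ $q>2/(1+2\alpha)$ in one dimension, and the restriction is needed precisely for this direction (the reverse inequality $\|D_x^\alpha h\|_{L^q}\lesssim\|h\|_{L^q}+\|S_\alpha h\|_{L^q}$ is the one that holds for all $1<q<\infty$; Strichartz's averaged square function removes the restriction but destroys the pointwise product structure your Cauchy--Schwarz step relies on). Consequently your argument proves \eqref{LeibnizRule1} only for $p_i>2/(1+2\sigma_i)$ --- in particular for all $p_1,p_2\ge 2$, which happens to cover every application in this paper (the estimates of $\mathcal{I}_1$ and $\mathcal{I}_2$ use $(p_1,p_2)=(2,\infty)$ and $(4,4)$) --- but not the full range $p_1,p_2\in(1,\infty)$ asserted in the lemma. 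Two smaller caveats: in the endpoint case $\sigma_2=0$, $p_2=\infty$ your splitting leaves you with $\|g\|_{L^\infty}\int|\Delta_yf|\,|y|^{-1-\sigma}dy$, an $L^1(dy)$ quantity that is controlled neither by $S_\sigma f$ nor by $\|D_x^\sigma f\|_{L^{p_1}}$ alone, so that case really does require a different argument; and the bilinear symbol in your alternative remark is not a Coifman--Meyer symbol off the diagonal (for $|\eta|\ll|\xi|$ it behaves like $(|\eta|/|\xi|)^{\sigma_1}$, whose $\eta$-derivatives violate the required bounds), which is exactly why the reference's proof passes through a paraproduct splitting rather than a single multiplier theorem.
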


The following commutator estimate was derived in Proposition 3.2 of \cite{DaMcPo}.
\begin{lemma} \label{comm_est}
Let $\alpha \in [0,1)$, $\beta \in (0,1)$ with $\alpha+\beta \in [0,1]$. Then, for any $p, q \in (1,\infty)$ and for any $\delta > 1/q$, there exists $c = c(\alpha; \beta; p; q; \delta) > 0$ such that
\begin{displaymath} 
\big\| D_x^{\alpha}[D^{\beta}_x,a]D_x^{1-(\alpha+\beta)}f\|_{L^p} \le c \|J^{\delta}_x\partial_xa\|_{L^q} \|f\|_{L^p} \, .
\end{displaymath}
\end{lemma}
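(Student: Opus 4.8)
The plan is to pass to the Fourier side, recognize the operator as a bilinear Fourier multiplier whose symbol gains one derivative on $a$, and then run a Littlewood--Paley analysis. Writing $D_x^{\alpha}[D_x^{\beta},a]D_x^{1-(\alpha+\beta)}f$ on the Fourier side, one finds that it equals, up to a constant,
\[
\iint e^{ix\xi}\,\sigma(\xi,\eta)\,\widehat a(\xi-\eta)\,\widehat f(\eta)\,d\eta\,d\xi,\qquad
\sigma(\xi,\eta):=|\xi|^{\alpha}\big(|\xi|^{\beta}-|\eta|^{\beta}\big)|\eta|^{1-(\alpha+\beta)},
\]
where $\xi$ is the output frequency, $\eta$ the frequency of $f$, and $\zeta:=\xi-\eta$ the frequency of $a$; denote by $T(a,f)$ the bilinear operator so defined. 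The key point is that $\sigma$ obeys the uniform pointwise bound $|\sigma(\xi,\eta)|\lesssim|\zeta|$. Indeed, $\sigma$ and $|\zeta|$ are both homogeneous of degree one, so after normalizing $|\eta|=1$ and bounding $|\zeta|\ge\big||\xi|-|\eta|\big|$ the claim reduces to $r^{\alpha}|r^{\beta}-1|\lesssim|r-1|$ for $r=|\xi|\ge0$, which holds near $r=1$ by the mean value theorem, as $r\to0$ because $\alpha\ge0$, and as $r\to\infty$ precisely because $\alpha+\beta\le1$. Thus the hypothesis $\alpha+\beta\le1$ is exactly what makes the commutator gain a full derivative that lands on $a$.

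Next I would decompose $a=\sum_j\Delta_j a$ and $f=\sum_k\Delta_k f$ into Littlewood--Paley blocks (frequencies $\sim2^j$, resp.\ $\sim2^k$) and split the resulting double sum into the standard low--high ($|\zeta|\ll|\eta|$), high--low ($|\zeta|\gg|\eta|$) and high--high ($|\zeta|\sim|\eta|$) interactions. On each block the symbol, once rescaled by $|\zeta|^{-1}$, is a smooth Coifman--Meyer symbol localized at the appropriate dyadic scales; placing the $a$-factor in $L^\infty$ and keeping $f$ in $L^p$, H\"older's inequality together with the size bound $|\sigma|\lesssim|\zeta|\sim2^{j}$ yields, for a typical block,
\[
\|T(\Delta_j a,\Delta_k f)\|_{L^p}\lesssim 2^{j}\,\|\Delta_j a\|_{L^\infty}\,\|\Delta_k f\|_{L^p}\lesssim\|\Delta_j\partial_x a\|_{L^\infty}\,\|\Delta_k f\|_{L^p}.
\]
The decisive use of the hypothesis $\delta>1/q$ is now the Besov embedding $W^{\delta,q}(\mathbb{R})\hookrightarrow B^{0}_{\infty,1}(\mathbb{R})$, which furnishes $\sum_j\|\Delta_j\partial_x a\|_{L^\infty}\lesssim\|J_x^{\delta}\partial_x a\|_{L^q}$; this is where the $L^q$ norm of the statement enters, and it is the summed-up form of Bernstein's inequality $\|\Delta_j g\|_{L^\infty}\lesssim2^{j/q}\|\Delta_j g\|_{L^q}$. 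The low--high and high--low sums then close immediately: one sums the $a$-pieces against this embedding while the $f$-pieces are controlled by $\sup_k\|\Delta_k f\|_{L^p}\lesssim\|f\|_{L^p}$, the outputs being frequency-localized.

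The main obstacle is the high--high interaction, where $|\zeta|\sim|\eta|\sim2^n$ while the output frequency $\xi$ may be arbitrarily small, so that the factor $|\xi|^{\alpha}$ is no longer smooth across the output support. The cleanest remedy is to estimate the whole block $T(\Delta_n a,\Delta_{\sim n}f)$ at once---its output is automatically supported in $\{|\xi|\lesssim2^n\}$, so that no resummation over output scales (which would diverge when $\alpha=0$) is needed---and to check that, after dividing by $2^n$, $\sigma$ remains an admissible bilinear symbol on this region. This gives again $\|T(\Delta_n a,\Delta_{\sim n}f)\|_{L^p}\lesssim\|\Delta_n\partial_x a\|_{L^\infty}\|\Delta_{\sim n}f\|_{L^p}$, and summing in $n$ through $\sum_n\|\Delta_n\partial_x a\|_{L^\infty}\lesssim\|J_x^{\delta}\partial_x a\|_{L^q}$ and $\sup_n\|\Delta_{\sim n}f\|_{L^p}\lesssim\|f\|_{L^p}$ completes the proof. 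Verifying the Coifman--Meyer hypotheses uniformly in the dyadic scale, together with the mild singularity of $|\xi|^{\alpha}$ at the origin in this last regime, is the technical heart of the argument; everything else is bookkeeping built on the uniform symbol bound $|\sigma|\lesssim|\zeta|$ and the embedding furnished by $\delta>1/q$.
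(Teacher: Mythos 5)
First, a point of comparison: the paper does not prove this lemma at all --- it is quoted as Proposition 3.2 of Dawson--McGahagan--Ponce \cite{DaMcPo} and used as a black box. So your argument is necessarily a ``different route'' from the paper's, and has to be judged on its own. Its architecture is the standard and correct one for such estimates, and you identify the right structural facts: the symbol $\sigma(\xi,\eta)=|\xi|^{\alpha}\big(|\xi|^{\beta}-|\eta|^{\beta}\big)|\eta|^{1-(\alpha+\beta)}$ is homogeneous of degree one and satisfies $|\sigma|\lesssim|\xi-\eta|$ precisely because $\alpha\ge0$ and $\alpha+\beta\le1$; the hypothesis $\delta>1/q$ enters only through $\sum_j\|\Delta_j\partial_xa\|_{L^\infty}\lesssim\|J_x^{\delta}\partial_xa\|_{L^q}$; and the high--high interaction with $\alpha$ possibly $0$ is where care is needed.

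There are, however, three places where the sketch asserts something that is not literally true or is glossed in a way that hides the actual work. (i) In the high--low and high--high regimes the rescaled symbol is \emph{not} a uniform Coifman--Meyer symbol: an $\eta$-derivative of $|\eta|^{\beta}$ or $|\eta|^{1-(\alpha+\beta)}$ costs $|\eta|^{-1}\sim2^{-k}$, whereas the Coifman--Meyer condition would require $(|\xi|+|\eta|)^{-1}\sim2^{-j}$ with $j\gg k$. The repair is to note that $\sigma=|\xi|^{\alpha+\beta}|\eta|^{1-(\alpha+\beta)}-|\xi|^{\alpha}|\eta|^{1-\alpha}$ is a difference of tensor products; in these two regimes no cancellation between the terms is needed, so each term is a linear multiplier in $\xi$ applied to $\Delta_ja$ times a linear multiplier applied to $f$, and Bernstein's inequality on the frequency-localized output (using $\alpha+\beta>0$, which follows from $\beta>0$) gives $2^{j}\|\Delta_ja\|_{L^\infty}\|f\|_{L^p}$ directly and also disposes of the singularity of $|\xi|^{\alpha}$ at the origin. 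The genuinely Coifman--Meyer step is only the low--high paraproduct, where $\xi$ and $\eta$ are comparable and bounded away from zero on each block, and there the uniformity does hold. (ii) In that low--high regime the resummation over the output frequency cannot be closed by ``$\sup_k\|\Delta_kf\|_{L^p}$''; for $p\ne2$ you need the Littlewood--Paley square-function characterization of $L^p$ applied to the almost-orthogonal outputs (or a vector-valued multiplier theorem). (iii) The reverse Bernstein step $2^{j}\|\Delta_ja\|_{L^\infty}\lesssim\|\Delta_j\partial_xa\|_{L^\infty}$ fails for the lowest frequency block of $a$; there one must use that $\sigma$ vanishes linearly on the diagonal $\xi=\eta$ to factor out $\widehat{\partial_xa}$ from the start. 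None of these is fatal --- they are the routine content of a complete proof --- but as written the sketch claims a uniformity that does not hold, and the place you call the ``technical heart'' is resolved by the tensor-product structure of $\sigma$ rather than by verifying Coifman--Meyer hypotheses.
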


\begin{corollary} \label{comm_est:coro}
Let $s>\frac32$. Then, 
\begin{equation} \label{comm_est.1}
\big\| [D^{\frac12}_x,a]D_x^{\frac12}f\big\|_{L^2} \lesssim \|a\|_{H^s} \|f\|_{L^2} \, .
\end{equation}
\end{corollary}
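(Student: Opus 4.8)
The plan is to deduce \eqref{comm_est.1} directly from Lemma \ref{comm_est}, since the left-hand side of the claimed estimate is already exactly of the form appearing there once the parameters are chosen appropriately. Matching $[D_x^{1/2},a]D_x^{1/2}f$ against the template $D_x^{\alpha}[D_x^{\beta},a]D_x^{1-(\alpha+\beta)}f$ forces $\alpha=0$ and $\beta=\frac12$, whence $1-(\alpha+\beta)=\frac12$, as wanted. So there is no genuine commutator work left to do; everything reduces to selecting admissible exponents and then converting the resulting coefficient bound into an $H^s$ norm.

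First I would verify that this choice is admissible for Lemma \ref{comm_est}: one has $\alpha=0\in[0,1)$, $\beta=\frac12\in(0,1)$ and $\alpha+\beta=\frac12\in[0,1]$, so the structural hypotheses hold. I take $p=q=2\in(1,\infty)$. It then remains to fix an exponent $\delta$ with $\delta>1/q=\frac12$; since $s>\frac32$ I may simply set $\delta=s-1>\frac12$, and this is precisely the point where the assumption $s>\frac32$ is used (the inequality must be strict). With these choices Lemma \ref{comm_est} gives
$$\big\|[D_x^{1/2},a]D_x^{1/2}f\big\|_{L^2} \lesssim \|J_x^{s-1}\partial_x a\|_{L^2}\,\|f\|_{L^2}\, .$$

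Finally I would control the coefficient factor by the Sobolev norm of $a$. By Plancherel's identity and the elementary bound $|\xi|^2\le 1+\xi^2$,
$$\|J_x^{s-1}\partial_x a\|_{L^2}^2 = \int (1+\xi^2)^{s-1}|\xi|^2\,|\widehat{a}(\xi)|^2\,d\xi \le \int (1+\xi^2)^{s}\,|\widehat{a}(\xi)|^2\,d\xi = \|a\|_{H^s}^2\, ,$$
so that combining the two displays yields \eqref{comm_est.1}. I expect no real obstacle here beyond correctly identifying the parameters: the only subtle points are that the condition $\delta>1/q$ of Lemma \ref{comm_est} can be satisfied with $\delta=s-1$ exactly because $s$ is strictly larger than $\frac32$, and that the homogeneous factor $|\xi|^2$ produced by $\partial_x$ is absorbed by the single Bessel derivative lost in passing from $H^{s-1}$ to $H^{s}$.
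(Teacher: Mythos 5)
Your proof is correct and follows essentially the same route as the paper: both apply Lemma \ref{comm_est} with $\alpha=0$, $\beta=\frac12$, $p=2$. The only (harmless) difference is that the paper picks general $q$ and $\delta$ with $0<\delta-\frac1q<s-\frac32$ and invokes the Sobolev embedding, whereas you fix $q=2$, $\delta=s-1$ and bound $\|J_x^{s-1}\partial_x a\|_{L^2}\le\|a\|_{H^s}$ directly by Plancherel, which slightly streamlines the last step.
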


\begin{proof}
The proof of estimate \eqref{comm_est.1} follows directly  combining Lemma \ref{comm_est} with $p=2$, $\alpha=0$, $\beta=\frac12$ with the Sobolev embedding by choosing $q$  and $\delta$ such that $0<\delta-\frac1q<s-\frac32$. 
\end{proof}

\section{Energy estimates} The main goal of this section is to prove the following energy estimate for the solutions of \eqref{WB}.
\begin{proposition} \label{EE}
Let $s>2$ and $(\eta,u) \in C\big([0,T] : H^s(\mathbb R)\times H^{s+\frac12}(\mathbb R)\big)$ be a solution to \eqref{WB} on a time interval $[0,T]$ for some $T>0$.
Let us define the \emph{modified energy} $E^{s}(\eta,u)$ by 
\begin{equation} \label{EE.1}
E^{s}(\eta,u)(t)=\frac12 \|\eta(t)\|_{H^s}^2+\frac12 \|u(t)\|_{H^{s+\frac12}}^2+\frac12\int \eta (J^s_xu)^2(t) \, ,
\end{equation}
for all $t \in [0,T]$. Assume moreover that $\eta$ satisfies the condition 
\begin{equation} \label{noncavitation.1}
\exists \, \widetilde{h}_0 \in (0,1), \ h_1>0 \quad \text{such that} \quad  \widetilde{h}_0-1 \le \eta(x,t)  \le h_1, \ \forall \, (x,t) \in \mathbb R^2 \, .
\end{equation}
Then, the following estimates hold true for all $t \in [0,T]$.

\medskip
(1) \underline{Coercivity}.   
\begin{equation} \label{EE.2}
\frac12\big(\|\eta\|_{H^s}^2+c_0\|u\|_{H^{s+\frac12}}^2\big) \le E^s(\eta,u)\le \frac12 \big(\|\eta\|_{H^s}^2+(1+h_1)\|u\|_{H^{s+\frac12}}^2\big)  \, ,
\end{equation}
where $c_0=c_0(\widetilde{h}_0)$ is a positive constant.

\medskip
(2) \underline{Energy estimate}.
\begin{equation} \label{EE.3}
\frac{d}{dt}E^s(\eta,u) \lesssim E^s(\eta,u)+E^s(\eta,u)^2 \, .
\end{equation}
\end{proposition}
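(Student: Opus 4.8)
The plan is to prove the coercivity \eqref{EE.2} by elementary pointwise bounds, and then to derive \eqref{EE.3} by differentiating $E^s$ in time, substituting the equations written in the form \eqref{WBbis}, and exhibiting two cancellations: one among the top-order \emph{linear} terms (produced by the $H^s\times H^{s+\frac12}$ scaling), and one for the top-order \emph{nonlinear} term (produced by the modified term $\frac12\int\eta(J^s_xu)^2$).

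For the coercivity I would use only \eqref{noncavitation.1}. Since $(J^s_xu)^2\ge0$ and $\widetilde h_0-1\le\eta\le h_1$ pointwise, one has $\frac12(\widetilde h_0-1)\|J^s_xu\|_{L^2}^2\le\frac12\int\eta(J^s_xu)^2\le\frac12 h_1\|J^s_xu\|_{L^2}^2$. The upper bound of \eqref{EE.2} follows at once from $\|J^s_xu\|_{L^2}^2=\|u\|_{H^s}^2\le\|u\|_{H^{s+\frac12}}^2$. For the lower bound, because $\widetilde h_0-1<0$ the same inequality $\|u\|_{H^s}^2\le\|u\|_{H^{s+\frac12}}^2$ may be inserted after multiplication by $(\widetilde h_0-1)$, giving $E^s\ge\frac12\|\eta\|_{H^s}^2+\frac12\widetilde h_0\|u\|_{H^{s+\frac12}}^2$, i.e. $c_0=\widetilde h_0$.

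Differentiating \eqref{EE.1} and using \eqref{WBbis}, $\frac{d}{dt}E^s$ is the sum of $\int J^s_x\eta\,J^s_x\partial_t\eta$, of $\int J^{s+\frac12}_xu\,J^{s+\frac12}_x\partial_tu$, and of $\frac12\int\partial_t\eta\,(J^s_xu)^2+\int\eta\,J^s_xu\,J^s_x\partial_tu$. The contribution of the smoothing operator $\mathcal M(D)(1-\partial_x^2)$ is harmless by \eqref{est:M} and \eqref{est:M_Linfty}. The two remaining top-order \emph{linear} terms are $\int J^s_x\eta\,J^s_x\mathcal H(1-\partial_x^2)u$ and $-\int J^{s+\frac12}_xu\,J^{s+\frac12}_x\partial_x\eta$; passing to Fourier variables and symmetrizing $\xi\mapsto-\xi$, their sum equals $\int\big(|\xi|(1+\xi^2)^{s+\frac12}-(1+\xi^2)^{s+1}\big)\mathrm{sgn}(\xi)\,\mathrm{Im}\big(\widehat\eta(\xi)\overline{\widehat u(\xi)}\big)\,d\xi$. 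Since the symbol factors as $-(1+\xi^2)^{s+\frac12}\big((1+\xi^2)^{\frac12}-|\xi|\big)$ and $(1+\xi^2)^{\frac12}-|\xi|=\big((1+\xi^2)^{\frac12}+|\xi|\big)^{-1}$, the symbol is $O\big((1+\xi^2)^{s}\big)$ (this is exactly where the $H^s\times H^{s+\frac12}$ scaling is used), so this sum is bounded by $\|\eta\|_{H^s}\|u\|_{H^s}\lesssim E^s$.

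The heart of the proof, and the main obstacle, is the top-order \emph{nonlinear} term. I would decompose the nonlinearity $\partial_x(\eta u)$ in the first equation as $J^s_x\partial_x(\eta u)=\eta\,J^s_x\partial_xu+u\,J^s_x\partial_x\eta+\mathcal R$. This produces three pieces: the uncontrollable term $-\int\eta\,(J^s_x\eta)(J^s_x\partial_xu)$ already seen in \eqref{direct_ee}; the term $-\int u\,(J^s_x\eta)(J^s_x\partial_x\eta)=\tfrac12\int(\partial_xu)(J^s_x\eta)^2$, harmless after integrating by parts; and the symmetric remainder $-\int J^s_x\eta\,\mathcal R$. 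The key point is that subtracting \emph{both} top-order Leibniz terms removes the dangerous ``all derivatives on $\eta$'' contribution, so $\mathcal R$ costs only $\|\mathcal R\|_{L^2}\lesssim\|\eta\|_{H^s}\|u\|_{H^{s+\frac12}}$ (by Lemma \ref{Kato-Ponce}, Lemma \ref{LeibnizRule} and Corollary \ref{comm_est:coro}) rather than the forbidden $\|\eta\|_{H^{s+1}}$. The uncontrollable term is then cancelled exactly by the modified energy: the linear part of $J^s_x\partial_tu$ in the cubic contribution is $-\int\eta\,J^s_xu\,J^s_x\partial_x\eta$, and integrating by parts yields $+\int\eta\,(J^s_x\partial_xu)(J^s_x\eta)$ plus the harmless $\int(\partial_x\eta)(J^s_xu)(J^s_x\eta)$. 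This cancellation is the sole reason for adding $\frac12\int\eta(J^s_xu)^2$ to the energy.

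Finally I would dispose of the genuinely lower-order terms: the Burgers term $-\int J^{s+\frac12}_xu\,J^{s+\frac12}_x(u\partial_xu)$ is controlled by \eqref{Kato-Ponce1} together with $u\,(J^{s+\frac12}_xu)(J^{s+\frac12}_x\partial_xu)=\tfrac12 u\,\partial_x\big((J^{s+\frac12}_xu)^2\big)$, and the cubic contributions of $\frac12\int\partial_t\eta\,(J^s_xu)^2$ and of $-\int\eta\,J^s_xu\,J^s_x(u\partial_xu)$ reduce, after integration by parts and \eqref{Kato-Ponce1}, to quantities bounded by $\big(\|\eta\|_{H^s}+\|u\|_{H^{s+\frac12}}\big)\big(\|\eta\|_{H^s}^2+\|u\|_{H^{s+\frac12}}^2\big)$. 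The single place forcing $s>2$ is the term $\frac12\int\mathcal H(1-\partial_x^2)u\,(J^s_xu)^2$, where $\mathcal H(1-\partial_x^2)u$ must be put in $L^\infty$ via $\|u\|_{H^{s+\frac12}}\gtrsim\|u\|_{H^{5/2+}}$. Collecting all estimates and invoking the coercivity \eqref{EE.2} to replace $\|\eta\|_{H^s}^2+\|u\|_{H^{s+\frac12}}^2$ by $E^s$ yields \eqref{EE.3}.
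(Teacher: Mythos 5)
Your proof is correct and follows essentially the same route as the paper: the same cubic modification $\tfrac12\int\eta\,(J^s_xu)^2$ produces the same exact cancellation of the dangerous term $\int \eta\, J^s_x\eta\, J^s_x\partial_xu$, your Fourier-side verification of the linear cancellation is precisely the paper's identity $D^1_x=\mathcal H\partial_x$ combined with estimate \eqref{est:BR}, and the remaining nonlinear and cubic contributions are handled by the same integration by parts and Kato--Ponce commutator arguments, with the restriction $s>2$ arising in the same place. The only (harmless) deviations are your more direct coercivity argument, which gives $c_0=\widetilde h_0$ without the paper's frequency splitting, and the observation that the remainder $\mathcal R=[J^s_x,\eta]\partial_xu+[J^s_x,u]\partial_x\eta$ in fact needs only Lemma \ref{Kato-Ponce}, not Lemma \ref{LeibnizRule} or Corollary \ref{comm_est:coro}.
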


\begin{proof} We observe by using  condition \eqref{noncavitation.1} and Plancherel's identity that 
\begin{equation*}
\begin{split}
\int (J^{s+\frac12}_xu)^2 +\int \eta (J^s_xu)^2 &\ge \widetilde{h}_0 \int (J_x^su)^2+\int (1+\xi^2)^s \big( (1+\xi^2)^{\frac12}-1\big) |\widehat{u}(\xi)|^2 \\ 
& \ge \widetilde{h}_0 \int (J_x^su)^2+\widetilde{c}_0\int_{|\xi| \ge 1}(1+\xi^2)^{s+\frac12} |\widehat{u}(\xi)|^2 \, ,
\end{split}
\end{equation*}
where $\widetilde{c}_0$ is a universal constant depending only $s$. This implies the first inequality in \eqref{EE.2} in view of the definition of \eqref{EE.1} by choosing $c_0:=\min\{\widetilde{h}_0,\widetilde{c}_0\}$. The proof of the second inequality in \eqref{EE.2} is a direct consequence of \eqref{EE.1} and \eqref{noncavitation.1}.

To prove estimate \eqref{EE.3}, we will work on the reformulated version \eqref{WBbis} of \eqref{WB}. We compute the time derivative of each term on the left-hand side of \eqref{EE.3} separately. 
\smallskip

First, we get by using the Cauchy-Schwarz inequality, \eqref{est:M} and the identity $D^1_x=\mathcal{H}\partial_x$ that
\begin{displaymath} 
\begin{split}
\frac12&\frac{d}{dt}\int (J^s_x \eta)^2\\ &=-\int J_x^s\mathcal{M}(D)(1-\partial_x^2)u J^s_x \eta+\int J^s_x\mathcal{H}u J^s_x\eta-\int J^s_x\mathcal{H}\partial_x^2uJ^s_x\eta-\int J_x^s\partial_x(\eta u)J^s_x\eta \\ 
& \le c\|u\|_{H^s}\|\eta\|_{H^s}-\int J^s_xD^1_x\partial_xuJ^s_x\eta-\int J_x^s\partial_x(\eta u)J^s_x\eta \, .
\end{split}
\end{displaymath}
Moreover, it follows after integration by parts that 
\begin{displaymath} 
\begin{split}
\frac12\frac{d}{dt}\int (J^{s+\frac12}_xu)^2&=-\int J^{s+\frac12}_x\partial_x\eta J^{s+\frac12}_xu-\int J_x^{s+\frac12}(u \partial_xu)J^{s+\frac12}_xu \\ 
&=\int J^{s}_x\eta J^{s}_x(J_x^1-D_x^1)\partial_xu+\int J^{s}_x\eta J^{s}_xD_x^1\partial_xu- \int J_x^{s+\frac12}(u \partial_xu)J^{s+\frac12}_xu \, .
\end{split}
\end{displaymath}
Hence, we deduce by using \eqref{est:BR} that 
\begin{equation} \label{EE.4}
\begin{split}
\frac12\frac{d}{dt}&\big(\|\eta(t)\|_{H^s}^2+\|u(t)\|_{H^{s+\frac12}}^2\big)\\ & \le c\|u\|_{H^s}\|\eta\|_{H^s}-\int J_x^s\partial_x(\eta u)J^s_x\eta 
- \int J_x^{s+\frac12}(u \partial_xu)J^{s+\frac12}_xu \, .
\end{split}
\end{equation}

Now, we deal with the nonlinear terms appearing on the right-hand side of \eqref{EE.4}. First, we observe that 
\begin{displaymath} 
\begin{split}
 \int J_x^s\partial_x(\eta u)J^s_x\eta 
 &=  \int J_x^s(\eta \partial_x u)J^s_x\eta+\int J_x^s(\partial_x\eta u)J^s_x\eta \\ 
 & = \int [J_x^s, \eta] \partial_x u J^s_x\eta+\int \eta J^s_x \partial_xu J^s_x\eta+\int [J_x^s,u]\partial_x\eta J^s_x\eta+\int u J^s_x\partial_x\eta J^s_x\eta \, .
 \end{split}
\end{displaymath}
On the one hand, we get by using the commutator estimate \eqref{Kato-Ponce1}
\begin{displaymath}
\left|\int [J_x^s, \eta] \partial_x u J^s_x\eta \right|+\left| \int [J_x^s,u]\partial_x\eta J^s_x\eta \right| \lesssim \big( \|\partial_x\eta\|_{L^{\infty}}\|u\|_{H^s} +\| \partial_xu\|_{L^{\infty}} \| \eta\|_{H^s}\big)\| \eta\|_{H^s} \, .
\end{displaymath}
On the other hand, integration by parts and H\" older's inequality yield
\begin{displaymath}
\left|\int u J^s_x\partial_x\eta J^s_x\eta \right| \lesssim \| \partial_xu\|_{L^{\infty}} \|\eta\|_{H^s}^2 \, .
\end{displaymath}
Then, we deduce gathering the above estimates that 
\begin{equation} \label{EE.5}
\begin{split}
 \int & J_x^s\partial_x(\eta u)J^s_x\eta\\ &=\int \eta J^s_x \partial_xu J^s_x\eta+\mathcal{O}\big(\big( \|\partial_x\eta\|_{L^{\infty}}\|u\|_{H^s} +\| \partial_xu\|_{L^{\infty}} \| \eta\|_{H^s}\big)\| \eta\|_{H^s} \big) \, .
 \end{split}
\end{equation}

To deal with the second one,  we get integrating by parts that 
\begin{displaymath} 
\begin{split}
 \int J_x^{s+\frac12}(u \partial_xu)J^{s+\frac12}_xu &= \int [J_x^{s+\frac12},u]\partial_xu J^{s+\frac12}_xu+ \int uJ_x^{s+\frac12} \partial_xuJ^{s+\frac12}_xu\\ &=\int [J_x^{s+\frac12},u]\partial_xu J^{s+\frac12}_xu-\frac12\int \partial_x u(J_x^{s+\frac12} u)^2 \, .
 \end{split}
\end{displaymath}
Then, it follows from the commutator estimate \eqref{Kato-Ponce1} and H\"older's inequality that 
\begin{equation} \label{EE.6} 
\left| \int J_x^{s+\frac12}(u \partial_xu)J^{s+\frac12}_xu \right| \lesssim \| \partial_xu \|_{L^{\infty}} \|u\|_{H^{s+\frac12}}^2 \, .
\end{equation}

Therefore, we conclude gathering \eqref{EE.4}, \eqref{EE.5} and \eqref{EE.6} and using the Sobolev embedding that 
\begin{equation} \label{EE.7}
\begin{split}
\frac12\frac{d}{dt}&\big(\|\eta\|_{H^s}^2+\|u\|_{H^{s+\frac12}}^2\big)\\ & \le -\int \eta J^s_x \partial_xu J^s_x\eta+c(1+\|\eta\|_{H^s})\|u\|_{H^s}\|\eta\|_{H^s}+c\|u\|_{H^s} \|u\|_{H^{s+\frac12}}^2 \, .
\end{split}
\end{equation}

Finally, we derive the cubic contribution of the energy with respect to time. By using \eqref{WBbis}, we get
\begin{equation} \label{EE.8}
\frac12\frac{d}{dt}\int \eta(J^s_x u)^2=\frac12\int \partial_t\eta(J^s_x u)^2+\int \eta J^s_x \partial_tu J_x^su=I_1+I_2+I_3 \, ,
\end{equation}
where 
\begin{displaymath}
I_1:=-\frac12\int \mathcal{M}(D)(1-\partial_x^2)u (J^s_x u)^2+\int \mathcal{H}u (J^s_x u)^2-\int \mathcal{H}\partial_x^2u(J^s_x u)^2-\int \partial_x(\eta u)(J^s_x u)^2 \, ,
\end{displaymath}
\begin{displaymath}
I_2:=-\int \eta J^s_x \partial_x \eta J^s_xu=\int \eta J^s_x \eta J^s_x\partial_xu+\int \partial_x\eta J^s_x \eta J^s_xu \, ,
\end{displaymath}
after integrating by parts, and 
\begin{displaymath}
I_3:=-\int \eta J^s_x(u\partial_xu)J_x^su \, .
\end{displaymath}

We have by using H\"older's inequality, \eqref{est:M_Linfty} and the Sobolev embedding that 
\begin{equation} \label{EE.9}
\begin{split}
|I_1| & \lesssim \big(\|u\|_{L^{\infty}}+\|\mathcal{H}u\|_{L^{\infty}}+\|\mathcal{H}\partial_x^2u\|_{L^{\infty}}+\|\partial_x(\eta u)\|_{L^{\infty}} \big)\|u\|_{H^s}^2
\\ & \lesssim \big( 1+\|\eta\|_{H^s} \big) \|u\|_{H^s}^3+c\|u\|_{H^{s+\frac12}}\|u\|_{H^s}^2 \, ,
\end{split}
\end{equation}
where we used the restriction $s +\frac12 > 2+\frac12$, \textit{i.e.} $s>2$.
Moreover, we observe that $I_2$ will cancel out with the first term on the right-hand side of \eqref{EE.7}. This is why we modify the energy by the cubic term $\frac12\int \eta(J^s_x u)^2$.
We rewrite $I_3$ by using the commutator notation and integration by parts as
\begin{displaymath} 
I_3=-\int \eta [J^s_x,u]\partial_xu J^s_xu-\int \eta u J^s_x\partial_xu J^s_xu=-\int \eta [J^s_x,u]\partial_xu J^s_xu+\frac12 \int \partial_x(\eta u)(J_x^su)^2 \, .
\end{displaymath}
Then, it follows from the Kato-Ponce commutator estimate \eqref{Kato-Ponce1} and the Sobolev embedding that 
\begin{equation} \label{EE.10} 
|I_3| \le \| \eta \|_{L^{\infty}} \| [J^s_x,u]\partial_xu \|_{L^2} \|J^s_xu \|_{L^2}+ \|\partial_x(\eta u)\|_{L^{\infty}} \|J_x^su\|_{L^2}^2 \lesssim \|\eta\|_{H^s}\|u\|_{H^s}^3
  \, .
\end{equation}
Hence, we deduce gathering \eqref{EE.8}-\eqref{EE.10} that 
\begin{equation} \label{EE.11}
\frac12\frac{d}{dt}\int \eta(J^s_x u)^2 \le \int \eta J^s_x \eta J^s_x\partial_xu+c\big( 1+\|\eta\|_{H^s} \big) \|u\|_{H^s}^3+c\|u\|_{H^{s+\frac12}}\|u\|_{H^s}^2 \, .
\end{equation}

There, we conclude the proof of estimate \eqref{EE.3} combining \eqref{EE.7} and \eqref{EE.11} with \eqref{EE.2}. 
\end{proof}

\section{Estimates for the differences of two solutions}

In this subsection, we derive  energy estimates for the difference of two solutions $(\eta_1,u_1)$ and $(\eta_2,u_2)$ of \eqref{WBbis} in $H^1(\mathbb R) \times H^{\frac32}(\mathbb R)$. 

Let us define $(\widetilde{\eta},\widetilde{u})=(\eta_1-\eta_2,u_1-u_2)$. Then $(\widetilde{\eta},\widetilde{u})$ is a solution to 
\begin{equation} \label{WBdiff} 
\left\{\begin{array}{l}
\partial_t \widetilde{\eta}+\mathcal{M}(D)(1-\partial_x^2) \widetilde{u}-\mathcal{H} \widetilde{u}+\mathcal{H}\partial_x^2 \widetilde{u}+\partial_{x}(\eta_1  \widetilde{u}+ \widetilde{\eta}u_2)=0 \, , \\ 
\partial_t  \widetilde{u}+\partial_x  \widetilde{\eta}+\frac12\partial_x((u_1+u_2) \widetilde{u})=0 \, ,
\end{array}\right.
\end{equation}
where the symbol $M(\xi)$ of the Fourier multiplier $\mathcal{M}(D)$ is defined in \eqref{def:M}.
\medskip

\begin{proposition} \label{Est_diff}
Let  $s>\frac52$ and $(\eta_1,u_1), \, (\eta_2,u_2) \in C([0,T] : H^s(\mathbb R)\times H^{s+\frac12}(\mathbb R))$ be two solutions to \eqref{WBbis} on a time intervall $[0,T]$ for some $T>0$.  

Let $(\widetilde{\eta},\widetilde{u})=(\eta_1-\eta_2,u_1-u_2)$ denote the difference between the two solutions.
We define the \emph{modified energy} $\widetilde{E}(\widetilde{\eta},\widetilde{v})$ by
\begin{equation} \label{Est_diff.1}
2\widetilde{E}(\widetilde{\eta},\widetilde{v})(t)=\|\widetilde{\eta}(t)\|_{L^2}^2+\|\partial_x\widetilde{\eta}(t)\|_{L^2}^2+\|\widetilde{u}(t)\|_{L^2}^2+\|D^{\frac12}_x\partial_x\widetilde{u}(t)\|_{L^2}^2+ \int \eta_1(\partial_x\widetilde{u})^2 \, ,
\end{equation}
for all $t \in [0,T]$. Assume moreover that $\eta_1$ satisfies the condition \eqref{noncavitation.1}. Then, the following estimates hold true on $[0,T]$.

\medskip
(1) \underline{Coercivity}. There exists $\alpha_0>0$ such that
\begin{equation} \label{Est_diff.2}
\frac12\big(\|\widetilde{\eta}\|_{H^1}^2+c_0\|\widetilde{u}\|_{H^{\frac32}}^2\big) \le \widetilde{E}(\widetilde{\eta},\widetilde{v}) \le \frac12\big(\|\widetilde{\eta}\|_{H^1}^2+(1+h_1)\|\widetilde{u}\|_{H^{\frac32}}^2\big) \, ,
\end{equation}
where $c_0=c_0(\widetilde{h}_0)$ is a positive constant..

\medskip
(2) \underline{Energy estimate}.
\begin{equation} \label{Est_diff.3}
\frac{d}{dt}\widetilde{E}(\widetilde{\eta},\widetilde{v})  \lesssim  \Big(1+\|\eta_1\|_{H^s}+\|\eta_2\|_{H^s}+\|u_1\|_{H^s}+\|u_2\|_{H^s} \Big)^2 \big( \|\widetilde{\eta}\|_{H^1}^2+ \|\widetilde{u}\|_{H^{\frac32}}^2\big)\ .
\end{equation}
\end{proposition}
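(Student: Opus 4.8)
The plan is to mimic the proof of Proposition~\ref{EE}, now viewing the difference system \eqref{WBdiff} as a variable-coefficient dispersive system whose coefficients $\eta_1,u_1,u_2$ enjoy the higher regularity $H^s\times H^{s+\frac12}$ (with $s>\frac52$), and measuring the unknown $(\widetilde{\eta},\widetilde{u})$ at the lower level $H^1\times H^{\frac32}$ --- the half-derivative drop being the usual price for comparing two solutions. The coercivity \eqref{Est_diff.2} is routine: by Plancherel's identity and the lower bound $\eta_1\ge\widetilde{h}_0-1$ coming from \eqref{noncavitation.1}, the three $\widetilde{u}$-terms of \eqref{Est_diff.1} dominate $\int\big(1+|\xi|^3-(1-\widetilde{h}_0)\xi^2\big)|\widehat{\widetilde{u}}(\xi)|^2\,d\xi$, and since $\widetilde{h}_0\in(0,1)$ the multiplier $1+|\xi|^3-(1-\widetilde{h}_0)\xi^2$ stays bounded below by $c_0(1+\xi^2)^{\frac32}$ for a suitable $c_0=c_0(\widetilde{h}_0)>0$; the upper bound is immediate from \eqref{Est_diff.1} and \eqref{noncavitation.1}.

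For \eqref{Est_diff.3} I would differentiate $2\widetilde{E}$ term by term along \eqref{WBdiff}. As in Proposition~\ref{EE}, the guiding principle is that only the top-order ($\dot H^{\frac32}$) contributions need care, everything of lower order being bounded by $\|\widetilde{\eta}\|_{H^1}^2+\|\widetilde{u}\|_{H^{\frac32}}^2$ and hence absorbed into the right-hand side. First, the leading linear coupling cancels: the dispersive term $\mathcal{H}\partial_x^2\widetilde{u}=D_x^1\partial_x\widetilde{u}$ in the first equation of \eqref{WBdiff} contributes $-2\int D_x^1\partial_x^2\widetilde{u}\,\partial_x\widetilde{\eta}$ to $\frac{d}{dt}\|\partial_x\widetilde{\eta}\|_{L^2}^2$, while $\partial_x\widetilde{\eta}$ in the second equation contributes $-2\int\partial_x^2\widetilde{\eta}\,D_x^1\partial_x\widetilde{u}$ to $\frac{d}{dt}\|D_x^{\frac12}\partial_x\widetilde{u}\|_{L^2}^2$ (using that $D_x^{\frac12}$ is self-adjoint and commutes with $\partial_x$), and these cancel after one integration by parts. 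The $\mathcal{M}(D)(1-\partial_x^2)$ contributions are harmless by the smoothing bounds \eqref{est:M}--\eqref{est:M_Linfty}, while the remaining linear terms (such as $\int\mathcal{H}\widetilde{u}\,\widetilde{\eta}$ or $\int\partial_x\widetilde{\eta}\,\widetilde{u}$) are controlled directly after shifting a half-derivative through $D_x^{\frac12}$.

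The core of the estimate is the top-order nonlinear term generated by $\partial_x(\eta_1\widetilde{u})$ in the $\widetilde{\eta}$-equation: expanding $\partial_x^2(\eta_1\widetilde{u})$, the dangerous piece is $P_1:=-2\int\eta_1\,\partial_x^2\widetilde{u}\,\partial_x\widetilde{\eta}$, which carries $\partial_x^2\widetilde{u}$ --- half a derivative above the $H^{\frac32}$ level --- and cannot be treated by integration by parts or commutators because of the coefficient $\eta_1$. This is precisely what the modified term $\int\eta_1(\partial_x\widetilde{u})^2$ is designed to neutralize: differentiating it and inserting the $-\partial_x\widetilde{\eta}$ part of the $\widetilde{u}$-equation produces $P_2:=-2\int\eta_1\,\partial_x^2\widetilde{\eta}\,\partial_x\widetilde{u}$, and the Leibniz identity $\partial_x(\partial_x\widetilde{u}\,\partial_x\widetilde{\eta})=\partial_x^2\widetilde{u}\,\partial_x\widetilde{\eta}+\partial_x\widetilde{u}\,\partial_x^2\widetilde{\eta}$ gives $P_1+P_2=-2\int\eta_1\,\partial_x(\partial_x\widetilde{u}\,\partial_x\widetilde{\eta})=2\int\partial_x\eta_1\,\partial_x\widetilde{u}\,\partial_x\widetilde{\eta}$, which is bounded by $\|\partial_x\eta_1\|_{L^\infty}\|\widetilde{u}\|_{H^1}\|\widetilde{\eta}\|_{H^1}$. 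This cancellation is the exact analogue, at the level of differences, of the mechanism used for $I_2$ in Proposition~\ref{EE}, and I expect it to be the main obstacle.

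It remains to dispose of the other nonlinear contributions, all of which are subcritical. Every term in which a derivative falls on a coefficient ($\partial_x\eta_1$, $\partial_x^2\eta_1$, $\partial_x u_2$, $\partial_x^2 u_2$, as well as $\partial_t\eta_1$, read off from the $\eta_1$-equation) is estimated by H\"older's inequality and Sobolev embedding, the hypothesis $s>\frac52$ ensuring that these quantities lie in $L^\infty$ with norm controlled by $\|\eta_1\|_{H^s}+\|u_2\|_{H^s}$ and the like. The only genuinely technical point is the top-order nonlinearity $-\int D_x^{\frac12}\partial_x^2\big((u_1+u_2)\widetilde{u}\big)\,D_x^{\frac12}\partial_x\widetilde{u}$ arising from $\frac{d}{dt}\|D_x^{\frac12}\partial_x\widetilde{u}\|_{L^2}^2$: writing $w=u_1+u_2$ and $v=D_x^{\frac12}\partial_x\widetilde{u}$, its leading part symmetrizes to $\frac12\int\partial_x w\,v^2$, while the remainder reduces to a commutator of the form $[D_x^{\frac12},w]D_x^{\frac12}(\cdot)$, which is controlled by Corollary~\ref{comm_est:coro}, together with the Kato--Ponce and fractional Leibniz estimates of Lemmas~\ref{Kato-Ponce}--\ref{LeibnizRule}. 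Collecting all bounds and invoking the coercivity \eqref{Est_diff.2} then yields \eqref{Est_diff.3}.
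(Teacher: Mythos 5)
Your proposal is correct and follows essentially the same route as the paper: the same modified energy, the same linear cancellation between $-\int \partial_x\widetilde{\eta}\,\mathcal{H}\partial_x^3\widetilde{u}$ and its counterpart from $\frac{d}{dt}\|D_x^{\frac12}\partial_x\widetilde{u}\|_{L^2}^2$, the same cancellation of $-\int \eta_1\,\partial_x\widetilde{\eta}\,\partial_x^2\widetilde{u}$ against the derivative of the cubic term, and the same treatment of $D_x^{\frac12}\partial_x^2\big((u_1+u_2)\widetilde{u}\big)$ via Leibniz, the fractional Leibniz rule and the commutator bound of Corollary~\ref{comm_est:coro}. The only cosmetic difference is that you verify coercivity by a direct pointwise lower bound on the Fourier multiplier $1+|\xi|^3-(1-\widetilde{h}_0)\xi^2$, whereas the paper refers back to the argument for \eqref{EE.2}; both are fine.
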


\begin{proof} The proof of estimate \eqref{Est_diff.2} is similar as the one of \eqref{EE.2}. 

To prove \eqref{Est_diff.3}, we compute separately the time derivative of each term on the right-hand side of \eqref{Est_diff.1}.
First, it follows directly by using \eqref{WBdiff} and integrating by parts that 
\begin{equation} \label{Est_diff.4}
\begin{split}
\frac12\frac{d}{dt} \int \widetilde{u}^2&=-\int \widetilde{u}\partial_x\widetilde{\eta}-\frac12 \int \widetilde{u} \partial_x((u_1+u_2)\widetilde{u}) 
\\ & \lesssim \|\widetilde{u}\|_{L^2}\|\partial_x \widetilde{\eta}\|_{L^2}+\big( \|\partial_xu_1\|_{L^{\infty}}+\|\partial_xu_2\|_{L^{\infty}} \big) \|\widetilde{u}\|_{L^2}^2 \, .
\end{split}
\end{equation}
By using \eqref{est:M}, integration by parts and H\"older's inequality, we get that 
\begin{equation} \label{Est_diff.5}
\begin{split}
\frac12\frac{d}{dt} \int \widetilde{\eta}^2&=-\int \widetilde{\eta}\mathcal{M}(D)(1-\partial_x^2)\widetilde{u}+ \int \widetilde{\eta} \mathcal{H}\widetilde{u}-\int \widetilde{\eta} \mathcal{H}\partial_x^2\widetilde{u}-\int \widetilde{\eta} \partial_{x}(\eta_1  \widetilde{u}+ \widetilde{\eta}u_2)
\\ & \lesssim \|\widetilde{u}\|_{H^1}\|\widetilde{\eta}\|_{H^1}+\big(\|\eta_1\|_{L^{\infty}}+ \|\partial_x\eta_1\|_{L^{\infty}} \big) \|\widetilde{u}\|_{H^1}\|\widetilde{\eta}\|_{L^2}+\|\partial_xu_2\|_{L^{\infty}}\|\widetilde{\eta}\|_{L^2}^2 \, .
\end{split}
\end{equation}
Now, we turn to the higher-order part of the $H^1 \times H^{\frac32}$ norm of $(\widetilde{\eta},\widetilde{u})$.  On the one hand, we have that 
\begin{displaymath} 
\begin{split}
\frac12\frac{d}{dt} \int (\partial_x\widetilde{\eta})^2&=-\int \partial_x\widetilde{\eta}\partial_x\mathcal{M}(D)(1-\partial_x^2)\widetilde{u}+ \int \partial_x\widetilde{\eta} \partial_x\mathcal{H}\widetilde{u}-\int \partial_x\widetilde{\eta} \mathcal{H}\partial_x^3\widetilde{u}\\ & \quad -\int \partial_x\widetilde{\eta} \partial_{x}^2(\eta_1  \widetilde{u}+ \widetilde{\eta}u_2) \, .
\end{split}
\end{displaymath}
To deal with the nonlinear term, we integrate by parts and use H\"older's inequality. It follows that 
\begin{displaymath}
\begin{split}
\int \partial_x\widetilde{\eta} \partial_{x}^2(\eta_1  \widetilde{u}+ \widetilde{\eta}u_2) &\le\int \eta_1 \partial_x\widetilde{\eta} \partial_x^2 \widetilde{u}
+c\big(\|\partial_x\eta_1\|_{L^{\infty}}+ \|\partial_x^2\eta_1\|_{L^{\infty}} \big) \|\widetilde{u}\|_{H^1}\|\widetilde{\eta}\|_{H^1}
\\ &\quad+ c\big(\|\partial_xu_2\|_{L^{\infty}}+ \|\partial_x^2u_2\|_{L^{\infty}} \big)\|\widetilde{\eta}\|_{H^1}^2 \, ,
\end{split}
\end{displaymath}
which implies together with \eqref{est:M} that 
\begin{equation} \label{Est_diff.6}
\begin{split}
\frac12\frac{d}{dt} \int (\partial_x\widetilde{\eta})^2 & \le c\|\widetilde{u}\|_{H^1}\|\widetilde{\eta}\|_{H^1}-\int \partial_x\widetilde{\eta} \mathcal{H}\partial_x^3\widetilde{u} -\int \eta_1 \partial_x\widetilde{\eta} \partial_x^2 \widetilde{u} \\ 
& \quad+c\big(\|\partial_x\eta_1\|_{L^{\infty}}+ \|\partial_x^2\eta_1\|_{L^{\infty}} \big) \|\widetilde{u}\|_{H^1}\|\widetilde{\eta}\|_{H^1}
\\ & \quad + c\big(\|\partial_xu_2\|_{L^{\infty}}+ \|\partial_x^2u_2\|_{L^{\infty}} \big)\|\widetilde{\eta}\|_{H^1}^2 \, .
\end{split}
\end{equation}
On the other hand, we compute 
\begin{displaymath} 
\frac12\frac{d}{dt} \int (D_x^{\frac12}\partial_x\widetilde{u})^2=-\int D_x^{\frac12}\partial_x\widetilde{u}D_x^{\frac12}\partial_x^2\widetilde{\eta}
-\int D_x^{\frac12}\partial_x\widetilde{u}D_x^{\frac12}\partial_x^2((u_1+u_2)\widetilde{u}) \, .
\end{displaymath}
By using the identity $D_x^1=\mathcal{H}\partial_x$ and integration by parts, we have
\begin{displaymath}
-\int D_x^{\frac12}\partial_x\widetilde{u}D_x^{\frac12}\partial_x^2\widetilde{\eta}=\int \mathcal{H}\partial_x^3\widetilde{u} \partial_x \widetilde{\eta} \, ,
\end{displaymath}
so that this term will cancel out with the second one on the right-hand side of \eqref{Est_diff.6}. Now, we deal with the nonlinear term. It follows by using the standard Leibniz rule that 
\begin{displaymath}
\begin{split}
\int D_x^{\frac12}\partial_x\widetilde{u}&D_x^{\frac12}\partial_x^2((u_1+u_2)\widetilde{u})\\ &=\int D_x^{\frac12}\partial_x\widetilde{u}D_x^{\frac12}\big(\partial_x^2(u_1+u_2)\widetilde{u}+2\partial_x^2(u_1+u_2)\partial_x\widetilde{u}+(u_1+u_2)\partial_x^2\widetilde{u} \big) \\ 
& =: \mathcal{I}_1+\mathcal{I}_2+\mathcal{I}_3 \, .
\end{split}
\end{displaymath}
We deduce from the fractional Leibniz rule \eqref{LeibnizRule1} that 
\begin{displaymath}
\begin{split}
|\mathcal{I}_1| &\lesssim \|D_x^{\frac12}\partial_x \widetilde{u} \|_{L^2} \big(\| \partial_x^2u_1\|_{L^{\infty}}+\| \partial_x^2u_2\|_{L^{\infty}}\big)\|D_x^{\frac12} \widetilde{u} \|_{L^2}\\ &\quad  + \|D_x^{\frac12}\partial_x \widetilde{u} \|_{L^2}(\| \partial_x^2D_x^{\frac12}u_1\|_{L^2}+\| \partial_x^2D_x^{\frac12}u_2\|_{L^2})\|\widetilde{u} \|_{L^{\infty}} 
\end{split}
\end{displaymath}
and 
\begin{displaymath}
\begin{split}
|\mathcal{I}_2| &\lesssim \|D_x^{\frac12}\partial_x \widetilde{u} \|_{L^2} \big(\| \partial_xu_1\|_{L^{\infty}}+\| \partial_xu_2\|_{L^{\infty}}\big)\|D_x^{\frac12}\partial_x \widetilde{u} \|_{L^2}\\ &\quad  + \|D_x^{\frac12}\partial_x \widetilde{u} \|_{L^2}(\| \partial_xD_x^{\frac12}u_1\|_{L^4}+\| \partial_xD_x^{\frac12}u_2\|_{L^4})\|\partial_x\widetilde{u} \|_{L^4} \, .
\end{split}
\end{displaymath}
Moreover, by using $\partial_x=-\mathcal{H}D^1_x$, the commutator notation and integration by parts, we get 
\begin{displaymath}
\begin{split}
\mathcal{I}_3&=\int D_x^{\frac12}\partial_x\widetilde{u} [D_x^{\frac12},u_1+u_2]\partial_x^2 \widetilde{u}
+\int D_x^{\frac12}\partial_x\widetilde{u} (u_1+u_2) D_x^{\frac12}\partial_x^2\widetilde{u} \\ 
& =-\int D_x^{\frac12}\partial_x\widetilde{u} [D_x^{\frac12},u_1+u_2]D_x^{\frac12} \mathcal{H}D_x^{\frac12}\partial_x \widetilde{u}
-\frac12 \int \partial_x(u_1+u_2) (D_x^{\frac12}\partial_x\widetilde{u})^2  \, .
\end{split}
\end{displaymath}
Hence, the commutator estimate \eqref{comm_est.1} and the Sobolev embedding yield
\begin{displaymath}
|\mathcal{I}_3|\lesssim (\| u_1\|_{H^s}+\| u_2\|_{H^s})\|D_x^{\frac12}\partial_x \widetilde{u} \|_{L^2}^2 \, .
\end{displaymath}
Therefore, we deduce gathering those estimates that 
\begin{equation} \label{Est_diff.7}
\frac12\frac{d}{dt} \int (D_x^{\frac12}\partial_x\widetilde{u})^2 \le \int \mathcal{H}\partial_x^3\widetilde{u} \partial_x \widetilde{\eta}+c(\| u_1\|_{H^s}+\| u_2\|_{H^s})\| \widetilde{u} \|_{H^{\frac32}}^2 \, .
\end{equation}
Finally, to deal with the third term on the right-hand side of \eqref{Est_diff.7}, we need to use the cubic part in the modified energy. Observe by using \eqref{WBbis} and \eqref{WBdiff} that 
\begin{displaymath} 
\frac12\frac{d}{dt}\int \eta_1(\partial_x \widetilde{u})^2=\frac12\int \partial_t\eta_1(\partial_x \widetilde{u})^2+\int \eta_1 \partial_x \partial_t\widetilde{u} \partial_x\widetilde{u}=\mathcal{J}_1+\mathcal{J}_2+\mathcal{J}_3 \, ,
\end{displaymath}
where 
\begin{displaymath}
\mathcal{J}_1=-\frac12\int \mathcal{M}(D)(1-\partial_x^2)u_1 (\partial_x \widetilde{u})^2+\int \mathcal{H}u_1  (\partial_x \widetilde{u})^2-\int \mathcal{H}\partial_x^2u_1 (\partial_x \widetilde{u})^2-\int \partial_x(\eta_1 u_1) (\partial_x \widetilde{u})^2 \, ,
\end{displaymath}
\begin{displaymath}
\mathcal{J}_2=-\int \eta_1 \partial_x^2 \widetilde{\eta} \partial_x\widetilde{u}=\int \eta_1 \partial_x \widetilde{\eta} \partial_x^2\widetilde{u} 
+\int \partial_x\eta_1 \partial_x \widetilde{\eta} \partial_x\widetilde{u}\, ,
\end{displaymath}
after integrating by parts, and 
\begin{displaymath}
\mathcal{J}_3=-\int \eta_1 \partial_x^2( (u_1+u_2)\widetilde{u})\partial_x \widetilde{u} \, .
\end{displaymath}

We have by using H\"older's inequality, \eqref{est:M_Linfty} and the Sobolev embedding that 
\begin{displaymath} 
\begin{split}
|\mathcal{J}_1|& \lesssim \big(\|u_1\|_{L^{\infty}}+\|\mathcal{H}u_1\|_{L^{\infty}}+\|\mathcal{H}\partial_x^2u_1\|_{L^{\infty}}+\|\partial_x(\eta_1 u_1)\|_{L^{\infty}} \big)\|\partial_x\widetilde{u}\|_{L^2}^2
\\ & \lesssim \big( \|u_1\|_{H^s}+\|\eta_1\|_{H^s}\|u_1\|_{H^s} \big) \|\partial_x\widetilde{u}\|_{L^2}^2 \, ,
\end{split}
\end{displaymath}
where we used the restriction $s> \frac52$.
Moreover, we observe that the first term on the right-hand side of $\mathcal{J}_2$ will cancel out with the third term on the right-hand side of \eqref{Est_diff.6}. To handle $\mathcal{J}_3$, we use the standard Leibniz rule and integration by parts to get
\begin{displaymath} 
\mathcal{J}_3=-\int \eta_1 \partial_x^2(u_1+u_2) \widetilde{u}\partial_x \widetilde{u}+ \int \big(-\frac32\eta_1\partial_x(u_1+u_2)+\frac12 \partial_x\eta_1(u_1+u_2)\big) (\partial_x \widetilde{u})^2 \, .
\end{displaymath}
It follows from H\"older and Sobolev inequalities that
\begin{displaymath} 
|\mathcal{J}_3| \lesssim \| \eta_1\|_{H^s} \big( \|u_1\|_{H^s}+\|u_2\|_{H^s} \big)\| \widetilde{u} \|_{H^1}^2 \, .
\end{displaymath}
Hence, we deduce gathering those estimates that 
\begin{equation} \label{Est_diff.8}
\frac12\frac{d}{dt}\int \eta_1(\partial_x \widetilde{u})^2 \le \int \eta_1 \partial_x \widetilde{\eta} \partial_x^2\widetilde{u}+c\big(1+\| \eta_1\|_{H^s}\big) \big( \|u_1\|_{H^s}+\|u_2\|_{H^s} \big)\| \widetilde{u} \|_{H^1}^2 \, .
\end{equation}

Therefore, we conclude the proof of \eqref{Est_diff.3} gathering \eqref{Est_diff.4}--\eqref{Est_diff.8}.

\end{proof}

\begin{remark}
Observe that the restriction $s>\frac52$ in Theorem \ref{LWP} (i) appears in Proposition \ref{EE}.
\end{remark}

\section{proof of Theorem \ref{LWP}}
We begin this section by proving an \textit{a priori} estimate on the solutions $(\eta,u)$ to \eqref{WB}. 
\begin{lemma} \label{apriori}
Let $s>\frac52$.  Let $(\eta,u) \in C\big([0,T^{\star}) : H^s(\mathbb R) \times H^{s+\frac12}(\mathbb R)\big)$ be a solution to \eqref{WB} corresponding to initial data $(\eta_0,u_0) \in H^s(\mathbb R) \times H^{s+\frac12}(\mathbb R)$ satisfying the non-cavitation condition \eqref{noncavitation}, defined on its maximal time of existence and satisfying the blow-up alternative: 
\begin{equation} \label{blow-up_alt}
\text{If} \quad T^{\star}<\infty \quad \text{then} \quad \displaystyle{\lim_{t \nearrow T^{\star}} \|(\eta(t),u(t))\|_{H^{s}\times H^{s+\frac12}}=+\infty} \, .
\end{equation}
Then, there exists $T_0=T_0(\|(\eta_0,u_0)\|_{H^s \times H^{s+\frac12}})$ such that $T^{\star} > T_0$ and 
\begin{equation} \label{apriori.1}
 \sup_{t \in [0,T_0]} \|(\eta,u)(t)\|_{H^s\times H^{s+\frac12}} \le c\|(\eta_0,u_0)\|_{H^s \times H^{s+\frac12}} \, ,
\end{equation}
for some positive constant $c$.
\end{lemma}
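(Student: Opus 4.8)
The plan is to establish Lemma~\ref{apriori} as a consequence of the coercivity and energy estimates already obtained in Proposition~\ref{EE}, combined with a continuity (bootstrap) argument that simultaneously controls the Sobolev norm and preserves the non-cavitation condition. The key point is that the modified energy $E^s(\eta,u)$ is \emph{equivalent} to the squared norm $\|\eta\|_{H^s}^2+\|u\|_{H^{s+\frac12}}^2$ thanks to the two-sided bound \eqref{EE.2}, so that controlling $E^s$ is the same as controlling the norm. The only subtlety is that the coercivity in \eqref{EE.2} requires the quantitative bounds \eqref{noncavitation.1} on $\eta$, which must be shown to persist on the time interval we construct.

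\textbf{Step 1: Propagation of the non-cavitation condition.}
First I would observe that along the flow of the second equation in \eqref{WB}, $\partial_t u+u\partial_x u=-\partial_x\eta$, and that $\eta$ is transported-like through the first equation. More directly, I would control $\|\eta(t)-\eta_0\|_{L^\infty}$ and $\|\eta(t)\|_{L^\infty}$ on a short time interval using the equation and the Sobolev embedding $H^s\hookrightarrow L^\infty$ (valid since $s>\frac52>\frac12$). Starting from the initial bound $1+\eta_0\ge h_0$, a continuity argument shows that for $t$ small one has $1+\eta(x,t)\ge \frac{h_0}{2}=:\widetilde{h}_0$ and, say, $\eta(x,t)\le \|\eta_0\|_{L^\infty}+1=:h_1$; this is exactly condition \eqref{noncavitation.1}. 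These bounds hold as long as the $H^s\times H^{s+\frac12}$ norm stays bounded, so they will be maintained automatically by the bootstrap below.

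\textbf{Step 2: Closing the differential inequality.}
With \eqref{noncavitation.1} in force, Proposition~\ref{EE} gives
\begin{equation*}
\frac{d}{dt}E^s(\eta,u)\lesssim E^s(\eta,u)+E^s(\eta,u)^2 .
\end{equation*}
Setting $y(t):=E^s(\eta,u)(t)$, this is a scalar Riccati-type inequality $y'\le c\,(y+y^2)$. Comparing with the solution of the ODE $Y'=c\,(Y+Y^2)$, $Y(0)=y(0)$, I obtain an explicit time $T_0=T_0(y(0))>0$ on which $y(t)\le 2\,y(0)$, and $T_0$ depends only on $y(0)$, hence only on $\|(\eta_0,u_0)\|_{H^s\times H^{s+\frac12}}$ via the upper bound in \eqref{EE.2}. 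Translating back through the coercivity lower bound in \eqref{EE.2} yields
\begin{equation*}
\sup_{t\in[0,T_0]}\|(\eta,u)(t)\|_{H^s\times H^{s+\frac12}}\le c\,\|(\eta_0,u_0)\|_{H^s\times H^{s+\frac12}},
\end{equation*}
which is \eqref{apriori.1}. Since the norm stays finite on $[0,T_0]$, the blow-up alternative \eqref{blow-up_alt} forces $T^\star>T_0$.

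\textbf{Main obstacle.}
The principal technical point is the interplay between the two continuity arguments: the coercivity of the modified energy is needed to bound the norm, but it in turn requires \eqref{noncavitation.1}, whose persistence is itself guaranteed only while the norm is controlled. I would resolve this by running a single bootstrap on the combined quantity: define $T_0$ as the supremum of times on which both $y(t)\le 2y(0)$ and $1+\eta(\cdot,t)\ge\widetilde{h}_0$ hold, and use the differential inequality together with the $L^\infty$-control of $\eta$ to show this set is open and closed, hence all of $[0,T_0]$ for the explicit $T_0$ coming from the Riccati comparison. The Riccati blow-up mechanism is what pins down the quantitative, non-decreasing dependence of $T_0$ on the initial norm.
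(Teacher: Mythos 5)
Your proposal is correct and follows essentially the same route as the paper: propagate the non-cavitation condition via the equation and the Sobolev embedding to control $\|\partial_t\eta\|_{L^\infty}$, integrate the Riccati inequality $y'\lesssim y+y^2$ for the modified energy, and close both bounds simultaneously by a bootstrap/continuity argument, invoking the blow-up alternative to conclude $T^\star>T_0$. The paper implements the same combined bootstrap as a contradiction argument with explicit thresholds ($8$, $4$, $6$ times the initial squared norm) and an explicit $T_0=\min\{T_1,T_2\}$, but the substance is identical.
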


\begin{proof} 
Let us define \[ \widetilde{T}:= \sup\Big\{ T \in (0,T^{\star}) \ : \ \sup_{t \in [0,T]} \| (\eta,u)(t)\|_{H^s \times H^{s+1/2}}^2 \le 8 \| (\eta_0,u_0)\|_{H^s \times H^{s+1/2}}^2 \Big\} \, .\]
Note that $\widetilde{T}<T^{\star}$, otherwise it would contradict the blow-up alternative \eqref{blow-up_alt}.

We  define $T_0:= \min\{T_1,T_2\}$ where \[ T_1=\frac1{C_1} \log \left( 1+\frac1{1+C_1\|(\eta_0,u_0)\|^2_{H^s\times H^{s+1/2}}}\right), \quad  T_2= \frac{h_0}{C_2 \big(1+\|\eta_0\|_{H^s} \big) \|u_0\|_{H^{s+1/2}}} \]
 and $C_1, \, C_2$ are two large positive constants to be fixed in the proof. 
 
 Assume by contradiction that $\widetilde{T}<T_0$, otherwise we are done. By continuity, we have that 
 \begin{equation} \label{apriori.10}
 \sup_{t \in [0,\widetilde{T}]}\| (\eta,u)(t)\|_{H^s \times H^{s+1/2}}^2 \le 8 \| (\eta_0,u_0)\|_{H^s \times H^{s+1/2}}^2\, .
 \end{equation}
We first verify that  the non-cavitation condition \eqref{noncavitation.1} holds on $[0,\widetilde{T}]$. On the one hand, since $\eta_0$ satisfies \eqref{noncavitation}, it follows from the fundamental theorem of calculus that 
\begin{equation*}
\eta(x,t)+1 =\eta_0(x)+1+\int_0^t \partial_t \eta(x,s) ds \ge h_0-\widetilde{T} \sup_{s\in [0,\widetilde{T}]} \| \partial_t \eta(s) \|_{L^{\infty}_x} \, ,
\end{equation*}
for all $t \in [0,\widetilde{T}]$.
On the other hand, we estimate trivially by using the first equation in \eqref{WBbis}, the Sobolev embedding and, then \eqref{apriori.10}, that 
\begin{equation*}
 \| \partial_t \eta \|_{L^{\infty}_x} \le c \big(1+\|\eta\|_{H^s} \big) \|u\|_{H^s} \le c  \big(1+\|\eta_0\|_{H^s} \big) \|u_0\|_{H^{s+1/2}}\, ,
 \end{equation*}
$\forall t \in [0,T_1]$. Thus, by recalling that $\widetilde{T}<T_2$ and by choosing $C_2$ large enough, we deduce from the above analysis that $\eta(x,t)+1 \ge h_0/2$ on $[0,\widetilde{T}]$. This and a similar argument together with the Sobolev embedding $\eta_0 \in H^s(\mathbb R) \hookrightarrow L^{\infty}(\mathbb R)$ show that the condition \eqref{noncavitation.1} hold on the time interval $[0,\widetilde{T}]$.

Let $y(t):= E(\eta,u)(t)$ denote the modified energy defined in \eqref{EE.1}. Then
\eqref{EE.3} leads to the inequality $y'(t) \le c\big( y(t)+y^2(t) \big)$, which can be integrated
to obtain
\begin{displaymath} 
y(t) \big(1-\frac{y_0}{1+y_0}e^{ct}\big) \le \frac{y_0}{1+y_0} e^{ct}, \quad \text{if} \quad \frac{y_0}{1+y_0}e^{ct}<1 \, ,
\end{displaymath}
on $[0,\widetilde{T}]$. Since $\widetilde{T} <T_1$, we get by choosing $C_1=C_1(h_0)$ large enough and by using  \eqref{EE.2}  that
\begin{equation*} 
\frac{y_0}{1+y_0}e^{ct} \le \frac{1+2y_0}{2(1+y_0)}<1 \quad \text{and} \quad \| (\eta,u)(t)\|^2_{H^s \times H^{s+1/2}} \le 4 \| (\eta_0,u_0)\|^2_{H^s \times H^{s+1/2}} ,
\end{equation*}
for all $t \in [0,\widetilde{T}]$. Thus, we deduce by continuity, that there exists some $\widetilde{T}<T<T^{\star}$ such that 
$\| (\eta,u)(T)\|^2_{H^s \times H^{s+1/2}} \le 6 \| (\eta_0,u_0)\|^2_{H^s \times H^{s+1/2}}$. This contradicts the definition of $\widetilde{T}$. Therefore, $\widetilde{T}<T_0$, which concludes the proof of Lemma \ref{apriori}.
\end{proof}

\medskip
With the \textit{a priori} estimate in hand, the complete proof of the existence would then result from a standard compactness argument implemented on a regularized version
of the system. The uniqueness is a consequence of the estimates for the difference of two solutions \eqref{Est_diff.3}. The strong continuity in time and the continuity of the flow would result from an application of the Bona-Smith argument \cite{BoSm} (we refer to \cite{KePi} for a detailed demonstration of the use of the Bona-Smith argument in the context of the modified energy).

\section{The two-dimensional case}
In this section, we comment briefly on the changes to adapt the proof in the two-dimensional setting. By denoting $\textbf{u}=(u_1,u_2)^{T}$, we reformulate the system \eqref{WB2d} as 
\begin{equation} \label{WB2dbis}
\left\{\begin{array}{l}
\partial_t \eta-(\widetilde{\mathcal{M}}(D)+1)(1-\Delta)(\mathcal{R}_1u_1+\mathcal{R}_2u_2)+\partial_{x_1}(\eta u_1)+\partial_{x_2}(\eta u_2)=0 \, , \\ 
\partial_t u_1+\frac12\partial_{x_1}(u_1^2+u_2^2)=0 \, , \\
\partial_t u_2+\frac12\partial_{x_2}(u_1^2+u_2^2)=0 \, ,
\end{array}\right.
\end{equation}
where $\mathcal{R}_j$ denote the Riesz transform and $\widetilde{\mathcal{M}}(D)$ is the Fourier multiplier associated to the symbol 
$\widetilde{M}(\xi)=\tanh |\xi|-1$, $\xi=(\xi_1,\xi_2)$ and $|\xi|=\sqrt{\xi_1^2+\xi_2^2}$. Note that the pointwise estimate $\big|\tanh|\xi|-1 \big| \le e^{-|\xi|}, \quad \forall \, \xi \in \mathbb R^2$, holds true.

We derive an energy estimate (analogous to Proposition \ref{EE}) at the level $(\eta,u_1,u_2) \in H^{s}(\mathbb R^2) \times H^{s+\frac12}(\mathbb R^2) \times H^{s+\frac12}(\mathbb R^2)$, $s>\frac52$ by using the modified energy 
\begin{displaymath} 
\begin{split}
E^{s}(\eta,u_1,u_2)(t)&=\frac12 \|\eta(t)\|_{H^s}^2+\frac12 \|u_1(t)\|_{H^{s+\frac12}}^2+\frac12 \|u_2(t)\|_{H^{s+\frac12}}^2 \\ & \quad+\frac12\int \eta \big((J^s_xu_1)^2 + (J^s_xu_2)^2\big)(t)\, .
\end{split}
\end{displaymath}
Note that instead of using the identity $D^1_x=\mathcal{H}\partial_x$, we use in a crucial way the identities $\mathcal{R}_j\Delta=D^1_x\partial_{x_j}$, $j=1,2$, to cancel out the linear terms. The modified energy is then used to handle the nonlinear term in the first equation of \eqref{WB2dbis}. To deal with the nonlinear terms in the second and third equations of \eqref{WB2dbis}, we use  Lemma 4.2. in \cite{LiPiSa}\footnote{As already observed in the introduction, it is for this reason that we need to make the curl-free assumption on the velocity $\textbf{u}$.}. 

The proof of the uniqueness is very similar to Proposition \ref{Est_diff}.

\vskip 0.05in
\noindent
{\bf Acknowledgments.} 
This research was supported by the Bergen Research Foundation Foundation (BFS),
the Research Council of Norway,  and the University of Bergen. 
The authors would also like to thank Jean-Claude Saut, Vincent Duch\^ene and Mats Ehrnstr\"om for helpful comments on a preliminary version of this work.
They also thank the anonymous referee for his comments and suggestions.

\bibliographystyle{amsplain}

\end{document}